\newtheorem{theorem}{Theorem}[section]
\newtheorem{proposition}[theorem]{Proposition}
\newtheorem{lemma}[theorem]{Lemma}
\newtheorem{corollary}[theorem]{Corollary}
\newtheorem{conjecture}[theorem]{Conjecture}
   \newcommand{\ba}{\begin{eqnarray}}
   \newcommand{\na}{\end{eqnarray}}
   \newcommand{\ban}{\begin{eqnarray*}}
   \newcommand{\nan}{\end{eqnarray*}}
\newcommand{\bC}{{\mathbb C}}
\newcommand{\bE}{{\mathbb E}}
\newcommand{\bP}{{\mathbb P}}
\newcommand{\bZ}{{\mathbb Z}}
\newcommand{\cO}{{\mathcal O}}
\newcommand{\Eff}{\textrm{Eff}}
\newcommand{\Pic}{\textrm{Pic}}
\newcommand{\Coeff}{\textrm{Coeff}}
  \newcommand{\<}{\langle}
  \renewcommand{\>}{\rangle}
\newcommand{\suml}{\sum\limits}
\newcommand{\prodl}{\prod\limits}
\begin{document}

\title{On conjecture $\cO$ for projective complete intersections}

\author[Hua-Zhong Ke]{Hua-Zhong Ke}
\address{School of Mathematics, Sun Yat-sen University, Guangzhou 510275, P.R. China;}
\email{kehuazh@mail.sysu.edu.cn}
\thanks{This work is supported by grants of National Natural Science Foundation of China (11601534, 11771461 and 11521101).}
\maketitle

\begin{abstract}
We prove that Fano complete intersections in projective spaces satisfy Conjecture $\cO$ proposed by Galkin-Golyshev-Iritani.
\end{abstract}

{\bf Keywords:} quantum cohomology, Conjecture $\cO$, complete intersection, primitive class.

{\bf MSC(2010):} 14N35.

\date{\today}

\tableofcontents

\section{Introduction}

Let $F$ be a Fano manifold, i.e. a compact, complex manifold with ample anti-canonical line bundle. We set $H(F):=H^{even}(F,\bC)$. The quantum product $\star_0$ on $H(F)$ is defined by
\ban
\<\phi_1\star_{0}\phi_2,\phi_3\>^F=\suml_{d\in\Eff(X)}\<\phi_1,\phi_2,\phi_3\>_{0,d}^F,
\nan
for any $\phi_1,\phi_2,\phi_3\in H(F)$. Here $\<\cdot,\cdot\>^F$ is the Poincar\'e pairing of $X$, $\Eff(F)\subset H_2(F,\bZ)/tor$ is the set of effective curve classes of $F$, and $\<\phi_1,\phi_2,\phi_3\>_{0,d}^F$ is a genus-zero Gromov-Witten invariant of $F$. Then $(H(F),\star_0)$ is a unital, commutative and associative algebra.

\if{

Note that $F$ is Fano, and therefore for any integer $D$, there are only finitely many effective classes $d$ such that $\int_dc_1(F)=D$, which implies that RHS of  \eqref{quantumproduct} is well-defined.

}\fi

Galkin-Golyshev-Iritani \cite{GGI} conjectured that the distribution of eigenvalues of the linear operator 
\ban
(c_1(F)\star_0):H(F)\rightarrow H(F)
\nan
has some intriguing properties. The precise statement is as follows.

\begin{conjecture}(Conjecture $\cO$)

Let $\rho$ be the Fano index of $F$, and let $T(F)$ be the spectral radius of $(c_1(F)\star_0)$. Then:
\begin{enumerate}
\item $T(F)$ is an eigenvalue of $(c_1(F)\star_0)$ with multiplicity one;
\item if $u$ is an eigenvalue of $(c_1(F)\star_0)$ with $|u|=T(F)$, then $\frac u{T(F)}$ is a $\rho$-th root of unity.
\end{enumerate}
\end{conjecture}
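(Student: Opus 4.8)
The plan is to exploit the classical splitting of $H(F)$ into an ``ambient'' and a ``primitive'' part. Write $F=X_{d_1,\dots,d_r}\subset\bP^{m}$ with $m=\dim F+r=:n+r$, let $h\in H^2(F)$ be the hyperplane class, and recall $\rho=m+1-\sum_i d_i$. By the Lefschetz hyperplane theorem and hard Lefschetz,
\[
H(F)=H_{amb}(F)\oplus H_{prim}(F),\qquad H_{amb}(F)=\bigoplus_{k=0}^{n}\bC\,h^{k},
\]
where $H_{amb}(F)$ is the image of $H^{even}(\bP^{m},\bC)$ and the primitive part $H_{prim}(F)=H^{n}_{prim}(F)$ is nonzero only when $n$ is even, in which case it sits in the middle degree; crucially, the Poincar\'e pairing is block-diagonal for this splitting. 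The first point to establish is that $(c_1(F)\star_0)$ preserves the splitting, which --- given the block-diagonality of the pairing --- reduces to the vanishing $\langle c_1(F),\phi,\psi\rangle_{0,d}^{F}=0$ for all $\phi\in H_{amb}(F)$, $\psi\in H_{prim}(F)$ and all $d$. For $d=0$ this is the classical orthogonality of powers of $h$ to primitive classes. For $d\neq0$ I would argue by monodromy: $F$ is a fibre of the universal family of smooth complete intersections of multidegree $(d_1,\dots,d_r)$ in $\bP^{m}$, genus-zero Gromov--Witten invariants are deformation invariant, and summed over all curve classes of a fixed anticanonical degree --- which is exactly the combination entering $\star_0$ --- they are invariant under the monodromy action on $H^{even}(F,\bC)$; since, by Lefschetz pencil theory, the monodromy representation on $H_{prim}(F)$ has no nonzero invariant vector, the monodromy-invariant functional $\psi\mapsto\suml_{d}\langle c_1(F),\phi,\psi\rangle_{0,d}^{F}$ on $H_{prim}(F)$ vanishes.

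For the primitive block, the same monodromy input together with the irreducibility over $\bC$ of the monodromy representation on $H_{prim}(F)$ (again Lefschetz pencil theory) forces the symmetric bilinear form $(\psi,\psi')\mapsto\suml_{d}\langle c_1(F),\psi,\psi'\rangle_{0,d}^{F}$ to be a scalar multiple of the Poincar\'e pairing; hence $(c_1(F)\star_0)|_{H_{prim}(F)}=\lambda\cdot\mathrm{id}$ for a single scalar $\lambda$. The divisor axiom together with a dimension count on the spaces of stable maps shows $\langle c_1(F),\psi,\psi'\rangle_{0,d}^{F}=0$ unless $\rho\int_{d}h=1$; consequently $\lambda=0$ whenever $\rho\geq2$, while for $\rho=1$ the number $\lambda$ equals the ratio of the two-pointed invariant $\langle\psi,\psi'\rangle_{0,\ell}^{F}$ in the line class $\ell$ to the Poincar\'e pairing $\langle\psi,\psi'\rangle^{F}$, a quantity I would compute from the geometry of the Fano scheme of lines on $F$ and bound in absolute value by an explicit expression polynomial in $m$ and the $d_i$.

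For the ambient block, $(c_1(F)\star_0)=\rho\,(h\star_0)$ acts on $H_{amb}(F)\cong\bC[h]/(h^{n+1})$ and is governed by quantum Lefschetz. When $\rho\geq2$ the quantum product is determined by the single relation $h^{\star(n+1)}=\bigl(\prodl_i d_i^{d_i}\bigr)\,h^{\star(\sum_i d_i-r)}$ (with $h^{\star k}$ the $k$-fold $\star_0$-power of $h$), so the characteristic polynomial of $(c_1(F)\star_0)|_{H_{amb}(F)}$ is
\[
\lambda^{\sum_i(d_i-1)}\Bigl(\lambda^{\rho}-\rho^{\rho}\prodl_i d_i^{d_i}\Bigr),
\]
whose nonzero roots are exactly the simple numbers $T\zeta$ with $\zeta^{\rho}=1$, where $T:=\rho\bigl(\prodl_i d_i^{d_i}\bigr)^{1/\rho}$, the remaining root $0$ having multiplicity $\sum_i(d_i-1)$; thus on $H_{amb}(F)$ the operator has spectral radius $T$, a simple eigenvalue, and all of its eigenvalues of modulus $T$ are $T$ times a $\rho$-th root of unity. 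When $\rho=1$ the mirror transformation deforms the relation to $h^{\star(n+1)}=\suml_{k\geq0}c_{k}\,h^{\star k}$ with $c_{k}\geq0$; one then applies the Perron--Frobenius theorem to the non-negative matrix of $(h\star_0)$ in the basis $1,h,\dots,h^{n}$ --- which is irreducible, in fact primitive --- to conclude that its spectral radius $T$ is a simple eigenvalue and is the unique eigenvalue of maximal modulus, and one checks $T>|\lambda|$. Combining the three blocks: on $H(F)$ the spectral radius of $(c_1(F)\star_0)$ is $T(F)=T$, it is an eigenvalue of multiplicity one, and every eigenvalue $u$ with $|u|=T(F)$ satisfies that $u/T(F)$ is a $\rho$-th root of unity --- which is Conjecture $\cO$.

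The main obstacle is the Fano-index-one case of the ambient block: once the transparent quantum relation is deformed by the mirror map, one must establish primitivity of the relevant non-negative matrix and the strict inequality $T>|\lambda|$ more or less by hand, which in turn rests on the explicit evaluation of the primitive line invariant $\langle\psi,\psi'\rangle_{0,\ell}^{F}$ from the previous paragraph. By contrast, the invariance of the splitting, the vanishing $\lambda=0$ for $\rho\geq2$, and the entire ambient analysis for $\rho\geq2$, are formal consequences of Lefschetz pencil theory, the divisor axiom, and quantum Lefschetz.
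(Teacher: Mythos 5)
Your overall architecture --- the ambient/primitive splitting, quantum Lefschetz for the ambient block, the scalar action $\lambda\cdot\mathrm{id}$ on the primitive block via monodromy irreducibility, and the vanishing $\lambda=0$ for $\rho\geq2$ by the divisor and dimension axioms --- matches the paper, and that part of your argument is sound. The genuine gap is the index-one case, which is where essentially all of the work in the paper lies. You correctly reduce everything to one number $\lambda$, the primitive eigenvalue coming from the line class, but then only propose to ``compute it from the geometry of the Fano scheme of lines and bound it in absolute value by an expression polynomial in $m$ and the $d_i$.'' No such computation is given, and the anticipated polynomial bound is in fact false: the paper shows $\lambda=-d_1!\cdots d_r!$, which grows factorially (for a hypersurface of degree $N+1$ one gets $|\lambda|=(N+1)!$), so no polynomial in $m$ and the $d_i$ can dominate it. The paper obtains this value by a rather long chain: the genus-one topological recursion relation expresses $N'\lambda$ through $\langle\tau_1(H)\rangle_{1,1}^X$ and $\langle H\rangle_{1,0}^X$; Zinger's standard-versus-reduced formula and the Getzler--Pandharipande obstruction-bundle computation convert these genus-one invariants into genus-zero ambient ones; Lee--Pandharipande's divisor relations reduce the resulting two-point invariants to one-point invariants; and Givental's mirror formula together with a generating-function identity evaluates the sum as $-(d_1!\cdots d_r!)N'$. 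Nothing in your sketch substitutes for this step.

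A secondary weakness is your treatment of the ambient block when $\rho=1$. You invoke Perron--Frobenius for an allegedly non-negative, primitive companion matrix, but you neither identify the deformed relation nor verify non-negativity and primitivity of its coefficients; and even granting Perron--Frobenius you would still be left with the unproved strict inequality $T>|\lambda|$. The paper instead quotes Givental's exact relation $(H+d_1!\cdots d_r!)^{\star_0(N+1)}=d_1^{d_1}\cdots d_r^{d_r}(H+d_1!\cdots d_r!)^{\star_0 N}$, which gives the ambient spectrum $\{-d_1!\cdots d_r!,\; d_1^{d_1}\cdots d_r^{d_r}-d_1!\cdots d_r!\}$ on the nose; combined with the exact value of $\lambda$ above (which coincides with the smaller ambient eigenvalue), this yields both the multiplicity-one statement for $T(X)$ and the root-of-unity statement. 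Until you supply the computation of $\lambda$, or at least a correct proof that $|\lambda|<T(X)$, the argument is incomplete precisely at its hardest point.
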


To the knowledge of the author,  Conjecture $\cO$ was checked for several classes of Fano manifolds, including homogeneous spaces \cite{CL} and odd symplectic Grassmannians \cite{LMS}. It is natural to study complete intersections inside these manifolds.

The most basic known examples are projective spaces \cite{GGI}. In this article, we consider Conjecture $\cO$ for Fano complete intersections, i.e. smooth complete intersections in projective spaces which are Fano. Our main result is the following.

\begin{proposition}\label{main}
Fano complete intersections satisfy Conjecture $\cO$.
\end{proposition}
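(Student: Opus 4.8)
The plan is to break $H^{even}(X)$ into its ambient and primitive parts and study $(c_1(X)\star_0)$ on each. We may assume $X=X_{d_1,\dots,d_c}\subset\bP^n$ is not a projective space --- projective spaces, and in particular all Fano complete intersections of dimension $\leq 1$, being settled in \cite{GGI} --- and that $N:=\dim X\geq 2$; then the hyperplane class $h$ is the primitive ample generator, $\rho=n+1-\sum_i d_i$ is the Fano index, $c_1(X)=\rho h$, and the Lefschetz hyperplane theorem gives an orthogonal (for the Poincar\'e pairing) decomposition $H^{even}(X)=H_{\mathrm{amb}}\oplus H_{\mathrm{prim}}$ with $H_{\mathrm{amb}}=\bigoplus_{k=0}^{N}\bC h^{k}$ and $H_{\mathrm{prim}}=H^{N}(X)_{\mathrm{prim}}$ (zero unless $N$ is even). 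I would first show $(c_1\star_0)$ is block diagonal for this decomposition. By Picard--Lefschetz the monodromy of the universal family of complete intersections of multidegree $(d_1,\dots,d_c)$ has no nonzero invariant vector in $H^{N}(X)_{\mathrm{prim}}$; combined with deformation invariance of Gromov--Witten invariants this forces every genus-zero three-point invariant with exactly one primitive insertion to vanish, whence $H_{\mathrm{amb}}$ is a $\star_0$-subalgebra and $(h\star_0)$ preserves $H_{\mathrm{prim}}$ (here one also uses that $h\cup\beta=0$ for $\beta$ primitive of middle degree). A dimension count on $\overline M_{0,3}(X,d)$ shows that $\langle h,\beta,\gamma\rangle_{0,d}\neq 0$ forces $\rho\,(h\cdot d)=1$, so in fact $(h\star_0)|_{H_{\mathrm{prim}}}=0$ whenever $\rho\geq 2$; when $\rho=1$, irreducibility of the monodromy representation (Beauville) shows $(h\star_0)|_{H_{\mathrm{prim}}}=\kappa\cdot\mathrm{id}$ for a scalar $\kappa\in\bQ$.

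For the ambient block I would invoke the quantum Lefschetz hyperplane principle: the small quantum product on $H_{\mathrm{amb}}$ is the one induced on $\bC[h]\big/\bigl(h^{\star(N+1)}-q\,C\,h^{\star(N+1-\rho)}\bigr)$, where $C=\prod_i d_i^{d_i}>0$ and $N+1-\rho=\sum_i d_i-c$; this specializes $QH^{*}(\bP^n)=\bC[h]/(h^{n+1}-q)$ and agrees with the classical computations for quadrics and for low-degree hypersurfaces. Putting $q=1$, the characteristic polynomial of $(c_1\star_0)|_{H_{\mathrm{amb}}}=\rho\,(h\star_0)|_{H_{\mathrm{amb}}}$ equals $\lambda^{\,N+1-\rho}\bigl(\lambda^{\rho}-\rho^{\rho}C\bigr)$. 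Its roots are $0$ with multiplicity $\sum_i d_i-c$, together with the $\rho$ pairwise distinct numbers $\rho\,C^{1/\rho}\zeta$, $\zeta^{\rho}=1$; in particular Conjecture $\cO$ holds for $(c_1\star_0)$ on $H_{\mathrm{amb}}$, with spectral radius $\rho\,C^{1/\rho}$, this value being a simple eigenvalue.

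To finish, note that $(c_1\star_0)$ is block diagonal on $H_{\mathrm{amb}}\oplus H_{\mathrm{prim}}$. If $\rho\geq 2$ the primitive block vanishes, so the primitive part contributes only the eigenvalue $0$ and Conjecture $\cO$ for $X$ is exactly the ambient statement. If $\rho=1$ the primitive block is $\kappa\cdot\mathrm{id}$, and the one remaining input is the strict inequality $|\kappa|<C=\prod_i d_i^{d_i}=T(X)$; without it the eigenvalue $\kappa$, which has multiplicity $\dim H_{\mathrm{prim}}\geq 2$, would violate both clauses of the conjecture. I expect this estimate on the primitive two-point invariant to be the main obstacle. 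It amounts to bounding $\langle\beta,\gamma\rangle_{0,\ell}$ for the line class $\ell$ --- equivalently, bounding $\kappa$ --- through the geometry of the Fano scheme of lines on $X$, for instance via the explicit description of $\overline M_{0,2}(X,\ell)$ and its virtual fundamental class, and checking that the outcome, which grows only polynomially in the $d_i$, stays below the exponentially large $\prod_i d_i^{d_i}$; the finitely many index-one cases in dimension $\leq 2$, namely the cubic surface and the $(2,2)$ complete intersection in $\bP^4$ (where $\kappa=-6$ and $\kappa=-4$), can be checked by hand and already exhibit a comfortable margin. Granting this, $T(X)=\rho\,C^{1/\rho}$ is a simple eigenvalue and every eigenvalue of modulus $T(X)$ is $T(X)$ times a $\rho$-th root of unity, which is Conjecture $\cO$.
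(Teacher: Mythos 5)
Your overall architecture (ambient/primitive decomposition, vanishing of the primitive block for $\rho\geq 2$ by the dimension count, reduction to a single scalar $\kappa$ on $H_{\mathrm{prim}}$ when $\rho=1$ via monodromy) matches the paper's, but there are two genuine gaps, one of which is the entire content of the paper's Section 3. First, your ambient spectrum is wrong in the index-one case. The relation $h^{\star(N+1)}=q\,C\,h^{\star(N+1-\rho)}$ is \emph{not} what quantum Lefschetz gives when $\rho=1$: there is a nontrivial mirror-map correction, and Givental's Corollary 10.9 yields instead $(H+d_1!\cdots d_r!)^{\star(N+1)}=C\,(H+d_1!\cdots d_r!)^{\star N}$. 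Consequently the ambient eigenvalues are $-d_1!\cdots d_r!$ (multiplicity $N$) and $C-d_1!\cdots d_r!$ (multiplicity one), so $T(X)=C-d_1!\cdots d_r!$, not $C$, and the inequality you would actually need is $|\kappa|<C-d_1!\cdots d_r!$. Your characteristic polynomial $\lambda^{N+1-\rho}(\lambda^{\rho}-\rho^{\rho}C)$ is only valid for $\rho>1$.

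Second, and more seriously, the key step --- controlling $\kappa$ --- is not carried out; you explicitly defer it and propose to bound it through the Fano scheme of lines, with the heuristic that the answer ``grows only polynomially in the $d_i$.'' That heuristic is false: the actual value is $\kappa=-d_1!\cdots d_r!$, which grows factorially (it just happens to stay below $C-d_1!\cdots d_r!$ because $C\geq 2\,d_1!\cdots d_r!$). No argument is given that the virtual class computation on $\overline M_{0,2}(X,\ell)$ produces any such bound, and for large multidegrees the Fano scheme of lines is not amenable to the kind of direct analysis you suggest. The paper instead \emph{computes} $\kappa$ exactly by a genus-one detour: the genus-one topological recursion relation expresses $N'\kappa$ in terms of $\langle\tau_1(H)\rangle_{1,1}^X$ and ambient data; Zinger's standard-versus-reduced formula converts the genus-one invariants into genus-zero descendant invariants with ambient insertions; Lee--Pandharipande's divisor relations reduce two-point to one-point invariants; and Givental's mirror formula evaluates these, yielding $N'\kappa=-N'\,d_1!\cdots d_r!$ via the identity $\chi_{top}(X)=(d_1\cdots d_r)c_N$. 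Without some substitute for this computation (or at least a proven bound $|\kappa|<C-d_1!\cdots d_r!$), your argument does not close.
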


\if{

An important corollary of our main result is the following.

\begin{corollary}
Fano complete intersections satisfy Gamma conjecture $I$.
\end{corollary}

}\fi

Note that Conjecture $\cO$ was verified for del Pezzo surfaces \cite{HKLY}. So in this article, we only study Conjecture $\cO$ for an $N$-dimensional smooth Fano complete intersection $X$ of degree $(d_1,\cdots,d_r)$ in $\bP^{N+r}$, with $N\geq3$, $r\geq1$, $d_1,\cdots,d_r\geq2$. 

By Givental's mirror formulae for small $J$-functions, one can check that the ambient part of $H(X)$ satisfies Conjecture $\cO$. To prove the full version, we need to compute some genus-zero Gromov-Witten invariants (GWI) of $X$ with primitive insertions. Let $\rho$ be the Fano index of $X$, and we have the following three cases: (i) $N$ is odd; (ii) $N$ is even with $\rho>1$; (iii) $N$ is even with $\rho=1$. In the case (i), the primitive part is zero, and in the case (ii), the relevant invariants are zero from some known vanishing properties for GWI with primitive insertions and the dimension axiom. Galkin-Iritani \cite{GI} also used this observation to prove Conjecture $\cO$ for Fano hypersurfaces in the cases (i) and (ii).

In the case (iii), the dimension constraint is not strong enough, and we determine these GWI with primitive insertions by going from genus zero to genus one and back. We first use the genus-one \emph{topological recursion relation} to express the relevant genus-zero GWI with primitive insertions in terms of some genus-zero and genus-one GWI with only ambient insertions. Furthermore, using Zinger's \emph{standard versus reduced} formula, we observe that the above mentioned genus-one GWI are combinations of some genus-zero GWI with only ambient insertions. So, the relevant GWI with primitive insertions can be determined by some genus-zero, one-pointed and two-pointed GWI with only ambient insertions, which in turn can be reduced to one-pointed invariants by Lee-Pandharipande's \emph{divisor relations}. Finally, from Givental's \emph{mirror formula} for one-pointed invariants, we apply some generating function techniques to find the exact values of these relevant GWI with primitive insertions.

The trick of going from genus zero to genus one and back was first used by X. Hu \cite{H} to determine the quantum cohomology of cubic hypersurfaces. We expect that this trick is useful in the verification of Conjecture $\cO$ for complete intersections in other ambient spaces.

Though we will not go into details, we point out that Conjecture $\cO$ underlies Gamma conjecture I, and projective spaces and del Pezzo surfaces are known to satisfy Gamma conjecture I \cite{GGI, HKLY}. As a direct application of Proposition \ref{main}, we have the following corollary from Theorem 8.3 in \cite{GI}.
\begin{corollary}
Fano complete intersections satisfy Gamma conjecture $I$.
\end{corollary}

\if{
To the knowledge of the author, besides del Pezzo surfaces and Fano complete intersections, Conjecture $\cO$ is also verified for a few other cases \cite{CL, LMS}. 
}\fi

An outline of this article is as follows. We prove Conjecture $\cO$ for $X$ in the cases (i) and (ii) in Section 2, and we deal with the case (iii) in Section 3. In Section 4, we consider a related conjecture proposed by Galkin.

\section{Proof of the cases (i) and (ii)}

\if{

For $N\geq3$, $r\geq1$, $d_1,\cdots,d_r\geq2$, let $X$ be an $N$-dimensional smooth Fano complete intersection of degree $(d_1,\cdots,d_r)$ in $\bP^{N+r}$. Let 
\ban
\rho:=N+r+1-d_1-\cdots-d_r.
\nan 
Then $1\leq\rho\leq N$, and the adjunction formula tells us that
\ban
c_1(X)&=&\rho H,
\nan
where $H$ is the restriction of the hyperplane class of $\bP^{N+r}$ to $X$. So the index of $X$ is $\rho$.

We have three cases:
\begin{itemize}
\item[(i)] $N$ is odd;
\item[(ii)] $N$ is even with $\rho>1$;
\item[(iii)] $N$ is even with $\rho=1$.
\end{itemize}
To prove Conjecture $\cO$ for $X$, we will deal with the cases (i) and (ii) in this section, and leave the case (iii) to the next section.

}\fi

We follow notations in the Introduction. Recall that $\rho$ is the Fano index of $X$ with
\ban
\rho=N+r+1-d_1-\cdots-d_r,
\nan 
and $1\leq\rho\leq N$ by our assumption. The adjunction formula tells us that
\ban
c_1(X)&=&\rho H,
\nan
where $H$ is the restriction of the hyperplane class of $\bP^{N+r}$ to $X$.

Let $H_{amb}(X)$ and $H_{prim}(X)$ be the ambient part and the primitive part of $H(X)$, repectively. Then 
\ba
H_{amb}(X)&=&\bigoplus_{i=0}^N\bC H^i,\nonumber\\
H_{prim}(X)&=&0,\textrm{ if }N\textrm{ is odd.}\label{noprim}
\na
We have a direct sum decomposition of vector spaces:
\ban
H(X)=H_{amb}(X)\oplus H_{prim}(X).
\nan
Moreover, $H_{amb}(X)$ is a subalgebra of $(H(X),\star_0)$ generated by $H$. If $\rho>1$, then from Corollary 9.3 in \cite{Gi}, the relation for $H$ in $H_{amb}(X)$ is 
\ba\label{R1}
H^{\star_0(N+1)}=d_1^{d_1}\cdots d_r^{d_r}H^{\star_0(N+1-\rho)},
\na
and if $\rho=1$, then from Corollary 10.9 in \cite{Gi}, the relation is 
\ba\label{R2}
(H+d_1!\cdots d_r!)^{\star_0(N+1)}=d_1^{d_1}\cdots d_r^{d_r}(H+d_1!\cdots d_r!)^{\star_0 N}.
\na
Now \eqref{R1} and \eqref{R2} imply the following lemma.
\begin{lemma}\label{spectrum}
If $1<\rho\leq N$, then the spectrum of $(c_1(X)\star_0)$ on $H(X)$ is
\ban
\{0\}\cup\{e^{\frac{2\pi k\sqrt{-1}}{\rho}}(d_1^{d_1}\cdots d_r^{d_r})^{\frac 1\rho}\rho\}_{k=0}^{\rho-1};
\nan
if $\rho=1$, then the spectrum is 
\ban
\{-d_1!\cdots d_r!,\quad d_1^{d_1}\cdots d_r^{d_r}-d_1!\cdots d_r!\}.
\nan
\end{lemma}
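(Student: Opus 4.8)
The plan is to decompose $H(X)=H_{amb}(X)\oplus H_{prim}(X)$, show that $(c_1(X)\star_0)=\rho\,(H\star_0)$ is block diagonal with respect to this decomposition, and then read off the eigenvalues of each block from \eqref{R1} and \eqref{R2} by a cyclic-vector argument. Since $H_{amb}(X)$ is a subalgebra it is $(H\star_0)$-invariant, so the crux of the block structure is that $(H\star_0)$ annihilates $H_{prim}(X)$, after which the spectrum of $(c_1(X)\star_0)$ is the union of the spectra of $\rho\,(H\star_0)$ on the two summands. Recall that $H_{prim}(X)$ is nonzero only when $N$ is even, hence --- within the cases (i) and (ii) treated in this section --- only when $\rho>1$; in particular, when $\rho=1$ we are automatically in case (i), $N$ is odd, and $H(X)=H_{amb}(X)$.

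To see that $(H\star_0)$ kills $H_{prim}(X)$, fix $\phi\in H_{prim}(X)\subseteq H^N(X)$. By the definition of the quantum product, $\langle H\star_0\phi,\psi\rangle^X=\sum_{d\in\Eff(X)}\langle H,\phi,\psi\rangle^X_{0,d}$ for every $\psi$, so by non-degeneracy of the Poincar\'e pairing it suffices to show that each term $\langle H,\phi,\psi\rangle^X_{0,d}$ vanishes. For $d=0$ the term is $\int_X H\cup\phi\cup\psi$, which is zero because the primitive part of the middle cohomology is killed by the Lefschetz operator $H\cup(-)$, so already $H\cup\phi=0$. For $d\neq0$: if $\psi\in H_{amb}(X)$, then $\langle H,\phi,\psi\rangle^X_{0,d}$ carries exactly one primitive insertion and vanishes by the known vanishing property for such invariants; if $\psi\in H_{prim}(X)$, then the insertions have total complex codimension $1+N/2+N/2=N+1$, whereas $\dim^{\mathrm{vir}}_{\bC}\overline{M}_{0,3}(X,d)=N+\rho\int_d H\geq N+\rho>N+1$ (using $c_1(X)=\rho H$, $\int_d H\geq1$ and $\rho>1$), so the invariant vanishes by the dimension axiom. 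Hence $(H\star_0)\phi=0$, and $(c_1(X)\star_0)$ is block diagonal, with the eigenvalue $0$ on $H_{prim}(X)$ whenever the latter is nonzero.

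It remains to compute the eigenvalues on $H_{amb}(X)$. Write $c:=d_1^{d_1}\cdots d_r^{d_r}$. Since $H_{amb}(X)$ is generated as an algebra by $H$, the unit $1$ is a cyclic vector for $(H\star_0)$ on this $(N+1)$-dimensional space, so $(H\star_0)|_{H_{amb}(X)}$ is non-derogatory and its characteristic polynomial, monic of degree $N+1$, coincides with its minimal polynomial. If $\rho>1$, relation \eqref{R1} evaluated at $1$ gives, by cyclicity, the operator identity $(H\star_0)^{N+1}=c\,(H\star_0)^{N+1-\rho}$ on $H_{amb}(X)$; hence the characteristic polynomial divides $t^{N+1}-c\,t^{N+1-\rho}$ and, having the same degree, equals $t^{N+1-\rho}(t^{\rho}-c)$. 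As $c\neq0$ and $1\leq N+1-\rho$, the eigenvalues of $(H\star_0)|_{H_{amb}(X)}$ are $0$ together with $c^{1/\rho}e^{2\pi k\sqrt{-1}/\rho}$ for $k=0,\dots,\rho-1$; multiplying by $\rho$ yields the set displayed in the statement, which together with the eigenvalue $0$ from the primitive block (if any) is the full spectrum. If $\rho=1$ (so $N$ is odd and $H(X)=H_{amb}(X)$), put $e:=d_1!\cdots d_r!$ and $K:=H+e\cdot 1$; then $1$ is still a cyclic vector for $(K\star_0)$, and since $(K\star_0)^j(1)=K^{\star_0 j}$, relation \eqref{R2} gives $(K\star_0)^{N+1}=c\,(K\star_0)^{N}$, so $(K\star_0)$ has characteristic polynomial $t^{N}(t-c)$, i.e. eigenvalues $0$ (multiplicity $N$) and $c$. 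As $(H\star_0)=(K\star_0)-e\cdot\mathrm{id}$ and $\rho=1$, the operator $(c_1(X)\star_0)=(H\star_0)$ has spectrum $\{-e,\ c-e\}=\{-d_1!\cdots d_r!,\ d_1^{d_1}\cdots d_r^{d_r}-d_1!\cdots d_r!\}$.

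The only non-formal ingredient is the vanishing of $(H\star_0)$ on $H_{prim}(X)$ for $N$ even: this is the single place where Gromov-Witten invariants with primitive insertions enter the argument, and it rests on the known vanishing of genus-zero invariants carrying exactly one primitive insertion together with the dimension axiom. Everything else is linear algebra organized around the single cyclic vector $1$.
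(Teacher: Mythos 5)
Your treatment of the ambient block is fine and in fact more careful than the paper's one-line derivation: the cyclic-vector argument turning \eqref{R1} and \eqref{R2} into the characteristic polynomials $t^{N+1-\rho}(t^\rho-c)$ and (after the shift by $d_1!\cdots d_r!$) $t^N(t-c)$ is exactly what the author leaves implicit, and your proof that $(H\star_0)$ kills $H_{prim}(X)$ when $\rho>1$ (one primitive insertion vanishing, plus the dimension count $\rho\int_d H>1$ for two primitive insertions) reproduces the paper's Lemmas 2.2--2.5.

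The gap is in the $\rho=1$ branch. You assert that ``when $\rho=1$ we are automatically in case (i), $N$ is odd, and $H(X)=H_{amb}(X)$.'' This is false: $\rho=1$ means $d_1+\cdots+d_r=N+r$, which is perfectly compatible with $N$ even (e.g.\ the quintic fourfold in $\bP^5$, or $X_4(2,4)\subset\bP^6$); this is precisely the paper's case (iii). For such $X$ one has $H_{prim}(X)\neq0$, the operator $(H\star_0)$ acts on it by a scalar $\lambda$, and the $\rho=1$ assertion of the lemma includes the claim that $\lambda\in\{-d_1!\cdots d_r!,\ d_1^{d_1}\cdots d_r^{d_r}-d_1!\cdots d_r!\}$. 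Nothing in \eqref{R1}, \eqref{R2}, the one-primitive-insertion vanishing, or the dimension axiom determines $\lambda$ here (the dimension constraint $\rho\int_dH=1$ is satisfiable when $\rho=1$), and computing $\lambda=-d_1!\cdots d_r!$ is the entire content of the paper's Section 3 (topological recursion, Zinger's standard-versus-reduced formula, divisor relations, and the mirror formula). So your proof establishes the lemma only on $H_{amb}(X)$ plus the primitive block for $\rho>1$; the $\rho=1$, $N$ even case is silently discarded rather than proved. To be fair, the paper's own justification (``\eqref{R1} and \eqref{R2} imply the following lemma'') is equally incomplete at this point and is only made good by Section 3, but a self-contained proof of the statement as written cannot dismiss case (iii) the way you do.
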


The following Lemma \ref{oneprimitive} tells us that $H_{prim}(X)$ is a module of $H_{amb}(X)$.

\begin{lemma}\label{oneprimitive}
For any $i\geq0$ and $\gamma\in H_{prim}(X)$, we have $\<c_1(X),H^i,\gamma\>_{0,d}^X=0$.
\end{lemma}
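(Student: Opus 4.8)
The plan is to prove slightly more: that the genus-zero Gromov-Witten invariant $\langle c_1(X),H^i,-\rangle_{0,d}^X$, viewed as a linear functional on $H_{prim}(X)$, vanishes because it is invariant under the monodromy of the family of all smooth complete intersections of multidegree $(d_1,\dots,d_r)$ in $\bP^{N+r}$. When $N$ is odd there is nothing to prove, since $H_{prim}(X)=0$ by \eqref{noprim}, so assume $N$ is even. Then $\gamma\in H_{prim}(X)\subset H^N(X)$, and by definition $H_{prim}(X)$ is the Poincar\'e-orthogonal complement of $\bC H^{N/2}$ inside $H^N(X)$, hence is orthogonal to all of $H_{amb}(X)$. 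In particular the $d=0$ contribution, equal to the classical triple product $\int_X c_1(X)\cup H^i\cup\gamma=\rho\int_X H^{i+1}\cup\gamma$, vanishes outright, so it remains to treat $d\neq 0$.

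Since $N\geq 3$, we have $H_2(X,\bZ)\cong\bZ$, generated by the class of a line, so the curve degree $d$ makes sense uniformly across any family of complete intersections in $\bP^{N+r}$. Let $\pi\colon\mathcal X\to S$ be the universal family of smooth complete intersections of multidegree $(d_1,\dots,d_r)$ in $\bP^{N+r}$, where $S$ is a non-empty Zariski-open subset of an irreducible projective variety, hence connected, and $X=X_{s_0}$ for some $s_0\in S$. The primitive cohomologies $H_{prim}(X_s)$ form a local subsystem $\mathcal H_{prim}$ of $R^N\pi_*\bC$, whereas $c_1(X_s)=\rho H$ and $H^i$, being restrictions of classes pulled back from $\bP^{N+r}$, are flat global sections of the local system of even cohomology. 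By the deformation invariance of Gromov-Witten invariants, $s\mapsto\big(\gamma\mapsto\langle c_1(X_s),H^i,\gamma\rangle_{0,d}^{X_s}\big)$ is a flat section of $\mathcal H_{prim}^\vee$; equivalently, $\langle c_1(X),H^i,-\rangle_{0,d}^X$ is a $\pi_1(S,s_0)$-invariant element of $H_{prim}(X)^\vee$.

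I would then invoke the classical fact that the monodromy action on the vanishing cohomology $H_{prim}(X)$ of a complete intersection of positive dimension with some $d_j\geq 2$ has no nonzero invariant vector: by the Picard-Lefschetz formula a monodromy-invariant class is orthogonal to every vanishing cycle, but the vanishing cycles span $H_{prim}(X)$, on which the intersection pairing is nondegenerate. As this representation preserves the polarization it is self-dual, so its dual likewise has no nonzero invariants; hence the flat section above is identically zero, and evaluation at $s_0$ gives $\langle c_1(X),H^i,\gamma\rangle_{0,d}^X=0$.

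The only non-formal ingredient is the vanishing of monodromy invariants on $H_{prim}(X)$; the passage to a flat functional (using that $c_1(X)=\rho H$ is pulled back from $\bP^{N+r}$), the deformation invariance, and the $d=0$ orthogonality are all routine. Accordingly, in a polished write-up I would simply cite the no-invariants (indeed, irreducibility) statement for the monodromy on vanishing cohomology of complete intersections rather than reprove Picard-Lefschetz. This lemma is exactly the ``known vanishing property'' used in cases (i) and (ii) of the Introduction; it controls only invariants with a single primitive insertion, which is why case (iii), involving invariants with more primitive insertions, needs the genus-one detour carried out in the next section.
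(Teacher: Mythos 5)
Your proof is correct, but it is not the route the paper takes: the paper disposes of this lemma in a single sentence, citing Lemma 1 of \cite{LP} (the same source it later invokes, via its Corollary 1, for the divisor relations in Lemma \ref{divisorrelation}), whereas you give a self-contained geometric argument via deformation invariance and Lefschetz monodromy theory. Each link in your chain is sound: the $d=0$ contribution is the classical triple product $\rho\int_X H^{i+1}\cup\gamma$, which vanishes because $H_{prim}(X)$ is Poincar\'e-orthogonal to the ambient ring; for $d\neq 0$, deformation invariance over the connected parameter space of smooth $(d_1,\dots,d_r)$-complete intersections makes $\gamma\mapsto\<c_1(X),H^i,\gamma\>_{0,d}^X$ a monodromy-invariant element of $H_{prim}(X)^\vee$; and the Picard--Lefschetz argument (an invariant class is orthogonal to every vanishing cycle, the vanishing cycles span $H_{prim}(X)$, and the pairing there is nondegenerate) kills any such element, the polarized representation being self-dual. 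Your route costs more classical input — deformation invariance in families, and the identification of $H_{prim}(X)$ with the vanishing cohomology spanned by vanishing cycles, which uses $d_j\geq 2$ — but it buys genuine generality: it shows that \emph{any} genus-zero Gromov--Witten class whose remaining insertions are all ambient annihilates $H_{prim}(X)$ (in particular it also disposes of the one-point primitive descendant invariants to which a purely formal reduction by the divisor relations would lead), and it makes structurally clear why case (iii) cannot be handled the same way, since $H_{prim}(X)\otimes H_{prim}(X)$ does carry monodromy invariants, as you note. If you keep your write-up, flag explicitly where $N$ even is used (so that $H_{prim}(X)\subset H^N(X)$ carries a symmetric nondegenerate form) and where $d_j\geq 2$ is used (so that vanishing cycles exist and span the primitive part).
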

\begin{proof}
This is a special case of Lemma 1 in \cite{LP}.
\end{proof}

From Lemma \ref{spectrum}, $T(X)$ is an eigenvalue of $(c_1(X)\star_0)$ on $H(X)$, and for any eigenvalue $u$ of $(c_1(X)\star_0)$ with $|u|=T(X)$, $\frac u{T(X)}$ is indeed a $\rho$-th root of unity. Moreover, as an eigenvalue of $(c_1(X)\star_0)$ acting on $H_{amb}(X)$, the multiplicity of $T(X)$ is one. So to prove Conjecture $\cO$ for $X$, we only need to show that $T(X)$ is not an eigenvalue of $(c_1(X)\star_0)$ acting on $H_{prim}(X)$. From \eqref{noprim}, we only need to consider the case of $N$ being even.

Since $N$ is even, it follows that $\<\cdot,\cdot\>^X$ is a symmetric, non-degenerate, bilinear form on $H_{prim}(X)$. Let
\ban
N':=\dim_\bC H_{prim}(X),
\nan
and let $\{\xi_i\}_{i=1}^{N'}$ be an orthonormal basis of $H_{prim}(X)$. Then Lemma \ref{oneprimitive} implies the following Lemma \ref{c1timesprim}.
\begin{lemma}\label{c1timesprim}
For $i=1,\cdots,N'$, we have $c_1(X)\star_0\xi_i=\suml_{j=1}^{N'}\<c_1(X),\xi_i,\xi_j\>_{0,1}^X\xi_j$.
\end{lemma}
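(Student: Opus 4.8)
The plan is to expand $c_1(X)\star_0\xi_i$ with respect to the orthogonal decomposition $H(X)=H_{amb}(X)\oplus H_{prim}(X)$, show that its ambient component vanishes by Lemma~\ref{oneprimitive}, and then see that only degree-one invariants survive in the primitive component.

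First I would record that the Poincar\'e pairing makes $H_{amb}(X)$ and $H_{prim}(X)$ orthogonal, and that $\{(d_1\cdots d_r)^{-1}H^{N-k}\}_{k=0}^{N}$ is the basis of $H_{amb}(X)$ dual to $\{H^k\}_{k=0}^{N}$, since $\<H^a,H^b\>^X=(d_1\cdots d_r)\delta_{a+b,N}$. Writing $c_1(X)\star_0\xi_i=\suml_{k}a_kH^k+\suml_{j}b_j\xi_j$, the definition of $\star_0$ gives $a_k=(d_1\cdots d_r)^{-1}\<c_1(X)\star_0\xi_i,H^{N-k}\>^X=(d_1\cdots d_r)^{-1}\suml_{d}\<c_1(X),H^{N-k},\xi_i\>^X_{0,d}$, and each summand vanishes by Lemma~\ref{oneprimitive} because $\xi_i\in H_{prim}(X)$. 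Hence $c_1(X)\star_0\xi_i\in H_{prim}(X)$ and $b_j=\<c_1(X)\star_0\xi_i,\xi_j\>^X=\suml_{d}\<c_1(X),\xi_i,\xi_j\>^X_{0,d}$.

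It then remains to see that only $d=1$ contributes to this last sum. Since $N\geq3$, the Lefschetz hyperplane theorem gives that $H_2(X,\bZ)$ has rank one, generated by the line class, so effective classes are nonnegative integers $d$ with $\int_dc_1(X)=\rho d$. The $d=0$ term is the classical triple product $\int_Xc_1(X)\cup\xi_i\cup\xi_j$, which vanishes since the total complex degree $\rho+N$ of the insertions exceeds $N=\dim_\bC X$. For $d\geq1$, the dimension axiom forces $\<c_1(X),\xi_i,\xi_j\>^X_{0,d}=0$ unless $\rho+N=\dim_\bC\overline{M}_{0,3}(X,d)=N+\rho d$, i.e. $d=1$. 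Therefore $\suml_d\<c_1(X),\xi_i,\xi_j\>^X_{0,d}=\<c_1(X),\xi_i,\xi_j\>^X_{0,1}$, and $c_1(X)\star_0\xi_i=\suml_{j=1}^{N'}\<c_1(X),\xi_i,\xi_j\>^X_{0,1}\xi_j$.

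I do not expect a serious obstacle here: the statement is essentially a formal consequence of Lemma~\ref{oneprimitive} together with the Fano dimension constraints. The only points requiring care are the orthogonality of the ambient and primitive parts (so that pairing against powers of $H$ isolates the ambient component), the existence of the orthonormal basis $\{\xi_j\}$ of $H_{prim}(X)$ (which uses that $N$ is even, so the pairing there is symmetric and nondegenerate), and the rank-one statement for $H_2(X,\bZ)$ underlying the reduction to $d=1$; the degenerate case $H_{prim}(X)=0$ makes the assertion vacuous.
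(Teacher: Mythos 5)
Your proposal is correct and follows exactly the route the paper intends: the paper gives no written proof, merely noting that Lemma~\ref{oneprimitive} kills the ambient component of $c_1(X)\star_0\xi_i$, after which one expands in the orthonormal basis of $H_{prim}(X)$ and invokes the dimension axiom to isolate $d=1$ — precisely your argument. One small correction: $c_1(X)=\rho H$ lies in $H^2(X)$ and so has complex degree $1$, not $\rho$; the dimension constraint for $d\geq1$ is therefore $N+1=N+\rho d$, i.e. $\rho d=1$, not $\rho+N=N+\rho d$. This only strengthens your conclusion (for $\rho>1$ even the $d=1$ invariant vanishes, consistent with the paper's subsequent lemma), so the stated identity still holds.
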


From Lemma \ref{c1timesprim} and the dimension axiom, we have the following.
\begin{lemma}
If $\rho>1$, then $c_1(X)\star_0\xi_i=0$ for $i=1,\cdots,N'$.
\end{lemma}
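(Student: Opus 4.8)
The statement to prove is that when $\rho>1$ and $N$ is even, the operator $(c_1(X)\star_0)$ annihilates each primitive class $\xi_i$. By Lemma \ref{c1timesprim} we already know $c_1(X)\star_0\xi_i=\sum_{j=1}^{N'}\langle c_1(X),\xi_i,\xi_j\rangle_{0,1}^X\xi_j$, so the whole assertion reduces to showing that every three-point, genus-zero, degree-one Gromov--Witten invariant $\langle c_1(X),\xi_i,\xi_j\rangle_{0,1}^X$ with two primitive insertions vanishes. The plan is to run the dimension axiom on this invariant and observe that, when $\rho>1$, the virtual dimension does not match the total degree of the insertions, forcing the invariant to be zero.

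First I would recall the dimension formula: for an $N$-dimensional Fano $X$ the genus-zero, $n$-pointed, degree-$d$ invariant $\langle\alpha_1,\dots,\alpha_n\rangle_{0,d}^X$ is nonzero only if $\sum_{k}\deg_{\bC}\alpha_k = N + \int_d c_1(X) + n - 3$, where degrees are taken as complex (cohomological) degrees. Here $n=3$, $d=1$, so the right-hand side is $N+\rho$. On the left-hand side, $c_1(X)=\rho H$ has complex degree $1$, and each primitive class $\xi_i,\xi_j$ lies in $H^N(X)$, hence has complex degree $N/2$. Thus the total insertion degree is $1 + N/2 + N/2 = N+1$. The invariant can therefore be nonzero only if $N+1 = N+\rho$, i.e. $\rho=1$, contradicting the hypothesis $\rho>1$. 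Hence $\langle c_1(X),\xi_i,\xi_j\rangle_{0,1}^X=0$ for all $i,j$, and by Lemma \ref{c1timesprim} we conclude $c_1(X)\star_0\xi_i=0$.

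One point worth spelling out is why the primitive cohomology of a Fano complete intersection of even dimension $N$ sits entirely in middle degree $H^N(X)$: this is the Lefschetz hyperplane theorem, which gives $H^i(X)\cong H^i(\bP^{N+r})$ for $i\neq N$, so all classes of $H(X)$ outside degree $N$ are ambient, and the primitive part — the orthogonal complement of $H_{amb}(X)$ under the Poincar\'e pairing — is concentrated in degree $N$. This is implicit in the decomposition $H(X)=H_{amb}(X)\oplus H_{prim}(X)$ already set up in the excerpt, together with $H_{amb}(X)=\bigoplus_{i=0}^N\bC H^i$.

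There is no real obstacle here: the argument is a one-line dimension count once the degrees are pinned down correctly. The only thing to be careful about is bookkeeping — using complex rather than real degrees consistently, and correctly accounting for the $n-3$ term in the virtual dimension — and making sure that the relevant vanishing-by-dimension statement for Gromov--Witten invariants with primitive insertions is the same one invoked (via Lemma \ref{oneprimitive} / Lemma 1 of \cite{LP}) for the structural fact that $H_{prim}(X)$ is a module over $H_{amb}(X)$. Since $\langle c_1(X),\xi_i,\xi_j\rangle_{0,1}^X$ is an honest GW invariant, the standard dimension axiom applies directly and no primitive-class subtleties beyond Lemma \ref{oneprimitive} are needed.
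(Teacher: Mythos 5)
Your proposal is correct and follows exactly the paper's route: the paper's entire proof is the remark that the lemma follows ``from Lemma \ref{c1timesprim} and the dimension axiom,'' and your dimension count ($1+N/2+N/2=N+1$ versus virtual dimension $N+\rho$, forcing $\rho=1$) is precisely the computation being left implicit. Nothing is missing.
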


So we have verified Conjecture $\cO$ for $X$ in the cases (i) and (ii). The remaining case (iii), in which $N$ is even with $\rho=1$, will be proved in the next section.

\section{Proof of the case (iii)}

In this section, we follow notations in the last section, and we assume that $N\geq3$ is even with $\rho=1$. We will show that  $T(X)$ is not an eigenvalue of $(c_1(X)\star_0)$
acting on $H_{prim}(X)$ (see the paragraph after Lemma \ref{oneprimitive}).

\if{

Throughout this section, let $X=X_N(d_1,\cdots,d_r)$ be an even-dimensional Fano complete intersection with index one, i.e. $N\geq4$ is even and $N+r=d_1+\cdots+d_r$. Then
\ban
c_1(X)&=&H,
\nan
where $H$ is the restriction of the hyperplane class of $\bP^{N+r}$ to $X$. 

Let $H^\cdot_{amb}(X)$ and $H^\cdot_{prim}(X)$ be the ambient part and the primitive part of $H^\cdot(X)$, repectively. Then we have a direct sum decomposition of vector spaces:
\ban
H^\cdot(X)=H^\cdot_{amb}(X)\oplus H^\cdot_{prim}(X).
\nan
From Corollary 10.9 in \cite{Gi}, $H^\cdot_{amb}(X)$ is a subalgebra of $(H^\cdot(X),\star_0)$, which is generated by $H$ with the relation
\ban
(H+d_1!\cdots d_r!)^{\star_0(N+1)}=d_1^{d_1}\cdots d_r^{d_r}(H+d_1!\cdots d_r!)^{\star_0 N}.
\nan

\begin{lemma}
The spectrum of $(H\star_0)$ on $H^\cdot(X)$ is 
\ban
\{-d_1!\cdots d_r!,\quad d_1^{d_1}\cdots d_r^{d_r}-d_1!\cdots d_r!\}.
\nan
\end{lemma}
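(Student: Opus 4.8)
The plan is to compute the spectrum of $(H\star_0)$ separately on the two pieces of the decomposition $H(X)=H_{amb}(X)\oplus H_{prim}(X)$. Recall $c_1(X)=H$ since $\rho=1$, and set $c:=d_1!\cdots d_r!$ and $D:=d_1^{d_1}\cdots d_r^{d_r}$. By Lemma \ref{oneprimitive}, $\langle H,H^i,\gamma\rangle_{0,d}^X=0$ for every $\gamma\in H_{prim}(X)$; since the Poincar\'e pairing is nondegenerate and restricts to both $H_{amb}(X)$ and $H_{prim}(X)$, this forces $H\star_0 H^i\in H_{amb}(X)$ and $H\star_0\gamma\in H_{prim}(X)$, so $(H\star_0)$ preserves each piece and its spectrum on $H(X)$ is the union of its spectra on $H_{amb}(X)$ and on $H_{prim}(X)$.

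For the ambient part, $1,H,H^2,\dots,H^N$ is a $\bC$-basis of $H_{amb}(X)$, and by the dimension axiom each quantum power $H^{\star_0 i}$ differs from the cup power $H^i$ only by a combination of $1,H,\dots,H^{i-1}$; hence $H$ generates $(H_{amb}(X),\star_0)$ as an algebra, so (with $\mathbf 1$ a cyclic vector) the minimal polynomial of $(H\star_0)|_{H_{amb}(X)}$ is monic of degree $N+1=\dim_{\bC}H_{amb}(X)$. The relation \eqref{R2} says precisely that this minimal polynomial divides $(t+c)^N\big(t-(D-c)\big)$, hence equals it by the degree count; thus $(H_{amb}(X),\star_0)\cong\bC[t]/\big((t+c)^N(t-(D-c))\big)$ and the spectrum of $(H\star_0)$ on $H_{amb}(X)$ is $\{-c,\,D-c\}$, with $D-c$ a simple eigenvalue. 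Moreover $D>2c$ (one checks $D/c\geq\prod_i d_i\geq4$), so $D-c>c>0$ and the spectral radius on $H_{amb}(X)$ is the simple eigenvalue $D-c$.

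It remains to treat $H_{prim}(X)$. In the orthonormal basis $\{\xi_i\}_{i=1}^{N'}$, Lemma \ref{c1timesprim} identifies $(H\star_0)|_{H_{prim}(X)}$ with the real symmetric matrix $A=\big(\langle H,\xi_i,\xi_j\rangle_{0,1}^X\big)_{i,j}$, so it suffices to show all its eigenvalues lie in $\{-c,\,D-c\}$; the computation should in fact give the stronger $A=-c\,\mathrm{Id}$, i.e.\ $\langle H,\xi_i,\xi_j\rangle_{0,1}^X=-c\,\delta_{ij}$. A clean reduction is available first: $\langle H,\cdot,\cdot\rangle_{0,1}^X$ is a symmetric bilinear form on $H_{prim}(X)$ invariant under the monodromy of the family of smooth complete intersections of type $(d_1,\dots,d_r)$ in $\bP^{N+r}$, and that monodromy acts irreducibly on $H_{prim}(X)$; since the Poincar\'e pairing is another nondegenerate invariant form, $A$ must equal $\lambda\,\mathrm{Id}$ for a single scalar $\lambda$, and only this $\lambda$ has to be determined.

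Determining $\lambda$ --- equivalently, evaluating a single genus-zero three-point invariant with two primitive insertions --- is the main obstacle, and I would carry it out along the lines of the Introduction. First use the genus-one \emph{topological recursion relation} to express the genus-zero invariants $\langle H,\xi_i,\xi_j\rangle_{0,1}^X$ through genus-zero and genus-one Gromov-Witten invariants with only ambient insertions; then apply Zinger's \emph{standard versus reduced} formula to rewrite those genus-one invariants as combinations of genus-zero ambient invariants; then reduce the resulting one- and two-pointed genus-zero ambient invariants to one-pointed ones via the Lee-Pandharipande \emph{divisor relations}; finally evaluate the one-pointed invariants from Givental's \emph{mirror formula} together with generating-function manipulations, obtaining $\lambda=-c$. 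Once $A=-c\,\mathrm{Id}$ is established, the spectrum of $(H\star_0)$ on $H_{prim}(X)$ is $\{-c\}$, so the spectrum on $H(X)$ is $\{-c,\,D-c\}$; since $|-c|=c<D-c=T(X)$, the spectral radius is attained with multiplicity one, which also proves Conjecture $\cO$ for $X$ in case (iii).
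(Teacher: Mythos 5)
Your overall strategy coincides with the paper's: split $H(X)$ into $H_{amb}(X)\oplus H_{prim}(X)$, read off the ambient spectrum $\{-c,\,D-c\}$ from Givental's relation \eqref{R2} (this is the $\rho=1$ case of Lemma \ref{spectrum}, and your cyclic-vector/minimal-polynomial argument for why the relation determines the spectrum is correct and slightly more explicit than the paper's), and reduce the primitive part to a single scalar $\lambda$. Your reduction of the matrix $\big(\<H,\xi_i,\xi_j\>_{0,1}^X\big)$ to $\lambda\,\mathrm{Id}$ via monodromy irreducibility of the vanishing cohomology is essentially the content of Theorem 1 of \cite{H}, which is what the paper cites; one small caution is that the matrix is complex symmetric (the orthonormal basis is for a $\bC$-bilinear form), not real symmetric, so diagonalizability is not automatic --- but your Schur-lemma argument sidesteps this, so that step is fine.

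The genuine gap is that you never compute $\lambda$. You assert ``obtaining $\lambda=-c$'' after naming the four tools (genus-one topological recursion, Zinger's standard-versus-reduced formula, the Lee--Pandharipande divisor relations, and Givental's mirror formula), but this evaluation is the entire technical content of the paper's Section 3 and is not a routine consequence of listing the tools. Concretely, one must: write out the TRR identity \eqref{TRR} and simplify it to \eqref{TRRs}; extract the precise coefficients $-\tfrac1{24}$ from Zinger's formula and identify which genus-zero descendant invariants appear (Lemmas \ref{m11} and \ref{m10}); reduce the resulting two-point invariants to one-point invariants (Lemmas \ref{cherntwopoint} and \ref{puretwopoint}); and then verify the generating-function identity $N'\lambda=\Coeff_{x^N}\big(g(x)\big)=-(d_1!\cdots d_r!)\,N'$, which hinges on the nontrivial coincidences $\chi_{top}(X)=(d_1\cdots d_r)c_N$ in \eqref{topeuler} and $N'=\chi_{top}(X)-(N+1)$ in \eqref{dimofprim}. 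Without carrying out this chain, the claim that the primitive eigenvalue equals $-d_1!\cdots d_r!$ --- i.e.\ that it lands exactly on the ambient eigenvalue $-c$ rather than introducing a new point of the spectrum --- is unproved, and the lemma does not follow.
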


}\fi

Since $N$ is even, it follows that $\dim_\bC H(X)=\chi_{top}(X)$, and hence
\ba\label{dimofprim}
N'=\chi_{top}(X)-(N+1).
\na
For $\rho=1$, it is well-known that $N'>0$, and we have $c_1(X)=H$.

\if{
Let $\{\xi_i\}_{i=1}^{N'}$ be an orthonormal basis of $H^\cdot_{prim}(X)$. 
\begin{lemma}
For $i=1,\cdots,N'$, we have $H\star_0\xi_i=\<H,\xi_i,\xi_i\>_{0,1}^X\xi_i$.
\end{lemma}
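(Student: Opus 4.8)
The plan is to deduce this from the stronger fact that $(c_1(X)\star_0)=(H\star_0)$ acts on $H_{prim}(X)$ as a \emph{scalar} operator $\lambda\cdot\mathrm{id}$. Granting this, the lemma is immediate: pairing $H\star_0\xi_i=\lambda\xi_i$ against $\xi_i$ and using orthonormality gives $\lambda=\langle H\star_0\xi_i,\xi_i\rangle^X=\langle H,\xi_i,\xi_i\rangle_{0,1}^X$, the last equality being Lemma \ref{c1timesprim} (equivalently: since $\rho=1$, the dimension axiom leaves only $d=1$ among effective classes contributing to $\langle H,\xi_i,\xi_i\rangle_{0,d}^X$). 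Hence the same scalar $\lambda$ equals $\langle H,\xi_i,\xi_i\rangle_{0,1}^X$ for every $i$, and $\langle H,\xi_i,\xi_j\rangle_{0,1}^X=0$ for $i\neq j$. Recall from Lemma \ref{c1timesprim} that $(H\star_0)$ already preserves $H_{prim}(X)$ and is represented there, in the basis $\{\xi_i\}$, by the matrix $A=\big(\langle H,\xi_i,\xi_j\rangle_{0,1}^X\big)_{i,j}$, symmetric by the permutation symmetry of genus-zero Gromov-Witten invariants; so what must be shown is exactly that $A=\lambda I$ (note that symmetry alone is not enough, since over $\bC$ a symmetric matrix need not be orthogonally diagonalizable).

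To get $A=\lambda I$ I would pass to the universal family $\pi\colon\mathcal X\to B$ of smooth complete intersections of multidegree $(d_1,\dots,d_r)$ in $\bP^{N+r}$, with $B$ the relevant open subvariety of the space of tuples of defining polynomials. The local system $R^{\bullet}\pi_*\bC$ on $B$ carries the monodromy representation $\pi_1(B,b)\to GL(H(X_b))$, which preserves the Poincar\'e pairing, the flat subspace $H_{amb}(X)=\mathrm{Im}\big(H(\bP^{N+r})\to H(X)\big)$, and therefore its orthogonal complement $H_{prim}(X)=H^N_{prim}(X;\bC)$. Since genus-zero Gromov-Witten invariants are deformation invariant, the structure constants of $\star_0$ with respect to a flat local frame are locally constant; combined with the monodromy-invariance of the characteristic class $c_1(X)$, this makes $(c_1(X)\star_0)=(H\star_0)$ a flat endomorphism of $R^{\bullet}\pi_*\bC$. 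Restricting, $(H\star_0)|_{H_{prim}(X)}$ commutes with the monodromy action on $H_{prim}(X)$.

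Now I would invoke the big-monodromy theorem for complete intersections (Lefschetz pencils, Deligne): for $N\geq3$ the Zariski closure of the monodromy image in the orthogonal group of $H^N_{prim}(X;\bC)$ is the full orthogonal group --- the few exceptional families being irrelevant here, as they are not of index one or have $N'=1$ --- and in particular the monodromy acts \emph{absolutely irreducibly} on $H^N_{prim}(X;\bC)$. By Schur's lemma an endomorphism commuting with an absolutely irreducible representation is a scalar, so $(H\star_0)|_{H_{prim}(X)}=\lambda\cdot\mathrm{id}$, i.e. $A=\lambda I$, as wanted. (Equivalently: the monodromy invariants in $H_{prim}(X)\otimes H_{prim}(X)$ form the single line spanned by the Poincar\'e form $\sum_i\xi_i\otimes\xi_i$, and the degree-one line correspondence on $X\times X$ governing $\langle H,-,-\rangle_{0,1}^X$, being defined family-wise, must lie on it.)

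The two substantive inputs are (a) that quantum multiplication by $c_1(X)$ is a flat endomorphism of the cohomology local system of the universal family --- the precise form of deformation-invariance of $\star_0$ relative to a flat trivialization, which is the point to set up with care --- and (b) the absolute irreducibility of the monodromy on primitive cohomology of complete intersections, which I would just cite. I expect (a) to be the conceptual crux; (b) is standard. Finally, this lemma only reduces case (iii) to determining the single scalar $\lambda=\langle H,\xi_i,\xi_i\rangle_{0,1}^X$; pinning $\lambda$ down --- and thence checking $|\lambda|\neq T(X)=d_1^{d_1}\cdots d_r^{d_r}-d_1!\cdots d_r!$, which is what completes Conjecture $\cO$ for $X$ --- is the separate computation carried out by the genus-zero/genus-one reductions outlined in the introduction.
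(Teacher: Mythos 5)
Your proposal is correct, but it is worth saying what it is really doing relative to the paper: the paper's entire proof of this lemma is the citation ``This follows from Theorem 1 in \cite{H}'' (together with Corollary 1.3 of \cite{H} for the vanishing of the ambient component), and what you have written is, in substance, a proof of that cited theorem rather than an appeal to it. The route is the same as Hu's: deformation invariance of genus-zero Gromov--Witten invariants makes $(H\star_0)$ a flat endomorphism of the cohomology local system of the universal family of complete intersections, monodromy preserves the Poincar\'e pairing and the ambient/primitive decomposition, and Deligne's absolute irreducibility of the monodromy representation on primitive cohomology (vanishing cycles are conjugate and span, and the form is nondegenerate on $H_{prim}(X)$) forces the commuting operator $(H\star_0)|_{H_{prim}(X)}$ to be a scalar by Schur's lemma. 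You are also right to flag that symmetry of the matrix $\big(\langle H,\xi_i,\xi_j\rangle_{0,1}^X\big)$ alone would not suffice over $\bC$, and your reduction of the scalar to $\langle H,\xi_i,\xi_i\rangle_{0,1}^X$ via orthonormality and the dimension axiom is exactly how the degree-one restriction enters. Two small remarks: for the Schur argument you only need absolute irreducibility, not Zariski density of the monodromy in the full orthogonal group, so the discussion of exceptional families with finite monodromy is unnecessary (and in any case those families do not occur for index one and $N\geq3$); and your proof in fact yields the slightly stronger statement recorded in the paper's actual lemma, namely that the scalar $\langle H,\xi_i,\xi_i\rangle_{0,1}^X$ is independent of $i$, which is what the subsequent computation of $\lambda$ uses. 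The trade-off is the usual one: the paper keeps the argument short by outsourcing the monodromy input, while your version is self-contained modulo standard references and makes visible where irreducibility of the primitive cohomology is genuinely used.
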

\begin{proof}
From the dimension constraint, we have
\ban
H\star_0\xi_i=\<H,\xi_i,H^{\frac N2}\>_{0,1}^X\frac{H^{\frac N2}}{d_1\cdots d_r}+\suml_{j=1}^{N'}\<H,\xi_i,\xi_j\>_{0,1}^X\xi_j.
\nan
On RHS, the first term is zero from Corollary 1.3 in \cite{H}, and the second term is equal to $\<H,\xi_i,\xi_i\>_{0,1}^X\xi_i$ from Theorem 1 in \cite{H}.
\end{proof}
}\fi

\begin{lemma}
For $i,j=1,\cdots,N'$, we have 
\ban
H\star_0\xi_i=\<H,\xi_i,\xi_i\>_{0,1}^X\xi_i,\textrm{ and }\<H,\xi_i,\xi_i\>_{0,1}^X=\<H,\xi_{j},\xi_{j}\>_{0,1}^X.
\nan
\end{lemma}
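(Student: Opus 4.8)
The plan is to show two things: first, that $H \star_0 \xi_i$ has no component along the ambient classes and no component along $\xi_j$ for $j \neq i$, so that $H\star_0 \xi_i = \langle H, \xi_i, \xi_i\rangle_{0,1}^X \xi_i$; and second, that the scalar $\langle H, \xi_i, \xi_i\rangle_{0,1}^X$ does not depend on $i$. For the first part I would expand $H \star_0 \xi_i$ in the orthonormal-type basis consisting of the ambient classes together with $\{\xi_j\}$. By Lemma \ref{oneprimitive} (with $i=0$, using $c_1(X) = H$), all three-point invariants $\langle H, H^a, \xi_i\rangle_{0,d}^X$ vanish, so the ambient part of $H\star_0\xi_i$ is zero; this is exactly the content already extracted in Lemma \ref{c1timesprim}, which gives $H\star_0 \xi_i = \sum_{j=1}^{N'}\langle H,\xi_i,\xi_j\rangle_{0,1}^X\xi_j$ (only degree $d=1$ survives by the dimension axiom in the case $\rho = 1$, since $\langle H,\xi_i,\xi_j\rangle_{0,d}^X$ has the wrong dimension for $d \geq 2$ and vanishes for $d = 0$ as the primitive part is orthogonal to $H$ times the primitive part in classical cohomology — more precisely the classical triple product $\int_X H\cup\xi_i\cup\xi_j$ can be nonzero, so I must be careful: the $d=0$ term contributes $\int_X H\cup\xi_i\cup\xi_j$, and I should absorb this, but in fact for the quantum product at $\tau = 0$ the structure constants are $\sum_{d\ge 0}$, so the cleanest route is to note $H\star_0\xi_i = H\cup\xi_i + (\text{positive-degree corrections})$ and handle both).

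For the off-diagonal vanishing among the $\xi_j$, the key input is the machinery the introduction describes: $\langle H, \xi_i, \xi_j\rangle_{0,1}^X$ for $i\neq j$ should vanish because, after running the genus-one topological recursion relation and Zinger's standard-versus-reduced formula, these invariants get expressed through genus-zero invariants with only ambient insertions, which are diagonal in the primitive part by the monodromy/deformation-invariance argument (the primitive cohomology carries an irreducible monodromy action under deformation of the complete intersection, forcing $\langle H,\xi_i,\xi_j\rangle_{0,1}^X = c\,\delta_{ij}$ for a constant $c$ independent of $i$). Concretely I would invoke that the one-point invariant with primitive insertion $\langle \xi_k \rangle$-type quantities and the two-point invariants $\langle \xi_i,\xi_j\rangle$ that arise are monodromy-invariant bilinear expressions in the $\xi$'s, hence proportional to the Poincaré pairing $\langle \xi_i,\xi_j\rangle^X = \delta_{ij}$; this simultaneously delivers both the vanishing for $i\ne j$ and the $i$-independence of the diagonal value, proving $\langle H,\xi_i,\xi_i\rangle_{0,1}^X = \langle H,\xi_j,\xi_j\rangle_{0,1}^X$.

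So the sequence of steps is: (1) cite Lemma \ref{c1timesprim} to reduce $H\star_0\xi_i$ to a sum over the primitive basis with coefficients $\langle H,\xi_i,\xi_j\rangle_{0,1}^X$; (2) establish that the bilinear form $(\alpha,\beta)\mapsto \langle H,\alpha,\beta\rangle_{0,1}^X$ on $H_{prim}(X)$ is invariant under the monodromy group acting on $H_{prim}(X)$ as one varies the defining polynomials of $X$ in the family of smooth complete intersections of type $(d_1,\dots,d_r)$ — this uses deformation invariance of Gromov–Witten invariants and the fact that $H$ and the class $1$ are monodromy-invariant; (3) apply irreducibility of this monodromy action on $H_{prim}(X)$ (a classical fact, e.g. via a Lefschetz-pencil / Picard–Lefschetz argument, valid since $N\ge 3$) together with Schur's lemma for the pair of forms $\langle H,\cdot,\cdot\rangle_{0,1}^X$ and $\langle\cdot,\cdot\rangle^X$ to conclude $\langle H,\xi_i,\xi_j\rangle_{0,1}^X = c\,\delta_{ij}$; (4) read off both assertions of the lemma.

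The main obstacle is step (2)–(3): making the monodromy argument rigorous requires knowing that the family of smooth complete intersections of a fixed type is connected and that Gromov–Witten invariants are constant in such a family (so the bilinear form is literally the same for all members and thus genuinely monodromy-invariant), and then quoting irreducibility of the monodromy representation on primitive cohomology. An alternative, and perhaps the route the author actually takes given the emphasis in the introduction, is to bypass monodromy entirely and compute $\langle H,\xi_i,\xi_j\rangle_{0,1}^X$ explicitly: use the genus-one topological recursion relation to trade it for genus-zero and genus-one ambient invariants, use Zinger's formula to rewrite the genus-one ambient invariants via genus-zero ambient invariants, reduce two-point to one-point invariants by Lee–Pandharipande divisor relations, and finally evaluate via Givental's mirror formula — the explicit value will manifestly be independent of $i$ and the off-diagonal terms will manifestly vanish. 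Either way, I would present step (1) as immediate and flag the heart of the proof as the determination of the structure constants $\langle H,\xi_i,\xi_j\rangle_{0,1}^X$, which is where the genus-zero-to-genus-one-and-back technique is doing its work.
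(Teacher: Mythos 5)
Your primary route is essentially the argument the paper relies on, but the paper itself does not spell it out: its entire proof is the citation ``This follows from Theorem 1 in \cite{H}'', and the content of that theorem of X.~Hu is precisely your steps (2)--(4), namely that deformation invariance of Gromov--Witten invariants makes $(\alpha,\beta)\mapsto\<H,\alpha,\beta\>_{0,1}^X$ a monodromy-invariant symmetric bilinear form on $H_{prim}(X)$, and irreducibility of the monodromy representation on the primitive (= vanishing) cohomology forces it to be a scalar multiple of the Poincar\'e pairing, giving $\<H,\xi_i,\xi_j\>_{0,1}^X=c\,\delta_{ij}$. So your proposal is correct and supplies the substance of the black box the paper invokes. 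Two small remarks. First, your worry about the $d=0$ term is unnecessary: $\deg(H\cup\xi_i\cup\xi_j)=2N+2>2N$, so the classical triple product vanishes for degree reasons, and in any case Lemma \ref{c1timesprim} already packages this. Second, your alternative route (b) would not by itself establish the lemma: the genus-one topological recursion relation as used in Section 3 only produces the trace $\suml_i\<H,\xi_i,\xi_i\>_{0,1}^X=N'\lambda$ (note the term $\frac1{24}\suml_i\<H,\xi_i,\xi_i\>_{0,1}^X$ in \eqref{TRR}), so it gives no access to individual off-diagonal entries; the monodromy argument is genuinely needed first, and the paper's Section 3 computation then pins down the common scalar $\lambda$. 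You correctly flag the monodromy step as the heart of the matter, and the needed inputs (connectedness of the family of smooth complete intersections of fixed multidegree, deformation invariance of GW invariants, and irreducibility of the vanishing-cohomology representation for a Lefschetz pencil) are all standard for $N\geq3$.
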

\begin{proof}
This follows from Theorem 1 in \cite{H}.
\end{proof}

So on $H_{prim}(X)$, $(H\star_0)$ is simply a scaling transformation with scale factor $\lambda:=\<H,\xi_1,\xi_1\>_{0,1}^X$. To prove Conjecture $\cO$ for $X$, we only need to show that 
\ba\label{lambda}
\lambda=-d_1!\cdots d_r!.
\na

To compute $\lambda$, we use the genus-one \emph{topological recursion relation} (see e.g. formula (3) in \cite{Ge}) to find:
\ba\label{TRR}
\<\tau_1(H)\>_{1,1}^X&=&\frac{1}{d_1\cdots d_r}\suml_{i=0}^{N}\<H,H^i\>_{0,1}^X\<H^{N-i}\>_{1,0}^X+\suml_{i=1}^{N'}\<H,\xi_i\>_{0,1}^X\<\xi_i\>_{1,0}^X\\
&&\quad+\frac{1}{d_1\cdots d_r}\suml_{i=0}^{N}\<H,H^i\>_{0,0}^X\<H^{N-i}\>_{1,1}^X+\suml_{i=1}^{N'}\<H,\xi_i\>_{0,0}^X\<\xi_i\>_{1,1}^X\nonumber\\
&&\quad+\frac{1}{24d_1\cdots d_r}\suml_{i=0}^{N}\<H,H^i,H^{N-i}\>_{0,1}^X+\frac{1}{24}\suml_{i=1}^{N'}\<H,\xi_i,\xi_i\>_{0,1}^X.\nonumber
\na
We can use the dimension axiom and the divisor axiom to simplify \eqref{TRR}:
\ba\label{TRRs}
\<\tau_1(H)\>_{1,1}^X&=&\frac{1}{d_1\cdots d_r}\<H^{N-1}\>_{0,1}^X\<H\>_{1,0}^X+\frac{1}{24d_1\cdots d_r}\suml_{i=0}^{N}\<H^i,H^{N-i}\>_{0,1}^X+\frac{1}{24}N'\lambda.
\na

The genus-one invariants in \eqref{TRRs} can be expressed in terms of genus-zero invariants with only ambient insertions, as shown in the following Lemma \ref{m11} and Lemma \ref{m10}.
\begin{lemma}\label{m11}
\ban
\<\tau_1(H)\>_{1,1}^X&=&-\frac1{24}\suml_{p=0}^{N-2}\bigg(\<\tau_p\big(c_{N-2-p}(X)\big)\tau_1\big(H\big)\>_{0,1}^X+\<\tau_p\big(c_{N-2-p}(X)\cup H\big)\>_{0,1}^X\bigg).
\nan
\end{lemma}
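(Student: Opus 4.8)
The plan is to apply Zinger's comparison between standard and reduced genus-one Gromov--Witten invariants to $\<\tau_1(H)\>_{1,1}^X$. Recall that $\overline{M}_{1,1}(X,1)$ carries a distinguished \emph{main component} --- the closure of the locus of stable maps whose domain is a smooth genus-one curve --- and that Zinger's formula expresses the standard invariant as the \emph{reduced} invariant (an integral over a desingularisation of this main component) plus a boundary correction supported on the strata where the genus-one part of the domain is contracted to a point. Since $\tau_1(H)$ is an ambient insertion, this applies to $X$ after combining it with the hyperplane (quantum Lefschetz) relation between $X$ and $\bP^{N+r}$; the correction then gets expressed through twisted genus-zero invariants of $\bP^{N+r}$, which assemble into genus-zero invariants of $X$ with Chern-class insertions.

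First I would show that the reduced invariant vanishes. Since $H_2(X,\bZ)\cong\bZ$ (Lefschetz, $N\ge 3$) and the curve class $1$ is represented by a line, which is rational, no non-constant stable map with smooth genus-one domain can have image of class $1$: such a map would be a non-constant morphism from a smooth genus-one curve onto a line, of degree $\ge 2$, pushing the fundamental class to a multiple $\ge 2$ of the line class, a contradiction. Hence the main component of $\overline{M}_{1,1}(X,1)$ (equivalently of $\overline{M}_{1,1}(\bP^{N+r},1)$) is empty, its desingularisation is empty, and the reduced invariant is $0$. Therefore $\<\tau_1(H)\>_{1,1}^X$ equals the boundary correction alone.

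It then remains to identify this correction explicitly. The relevant boundary stratum parametrises maps whose domain is a contracted genus-one curve $\Sigma_E$ meeting, at a single node, a genus-zero curve $\Sigma_0$ carrying the full class $1$, and there are two sub-cases according to whether the marked point lies on $\Sigma_0$ or on $\Sigma_E$. Integrating over the $\overline{M}_{1,1}$-factor parametrising $\Sigma_E$ contributes $\int_{\overline{M}_{1,1}}\psi=\tfrac{1}{24}$; the obstruction along the contracted component is the Hodge-twisted bundle built from $\mathrm{ev}^{*}TX$, with the twist by $E=\bigoplus_i\cO(d_i)$ absorbed through quantum Lefschetz, and expanding its Euler class in the Hodge class $\lambda_1$ --- only $\lambda_1^{\le 1}$ surviving on $\overline{M}_{1,1}$ --- turns the contribution into an insertion at the node of the form $\psi^{p}\,c_{N-2-p}(X)$, the total Chern class of $X$ entering through $c(TX)=c(T\bP^{N+r})/c(E)\big|_X$; the range $0\le p\le N-2$ is forced by $\mathrm{vdim}\,\overline{M}_{0,2}(X,1)=N$, $\mathrm{vdim}\,\overline{M}_{0,1}(X,1)=N-1$ and the degree $2$ of $\tau_1(H)$. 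In the first sub-case the marked point survives as $\tau_1(H)$ on a genus-zero two-pointed invariant, giving $\<\tau_p(c_{N-2-p}(X))\,\tau_1(H)\>_{0,1}^X$; in the second, the elliptic component being contracted, the evaluation at the marked point equals the evaluation at the node, so $H$ merges with the node insertion and one obtains $\<\tau_p(c_{N-2-p}(X)\cup H)\>_{0,1}^X$. Both families enter with the common coefficient $-\tfrac{1}{24}$, and summing over $p$ yields the asserted identity.

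The main obstacle is precisely this last step: pinning down the combinatorics and the overall sign of the correction term in Zinger's formula --- verifying that it is the Chern classes $c_{N-2-p}(X)$ of $X$ itself, and not some other polynomial in $c(T\bP^{N+r})$ and $c(E)$, that appear at the node, tracking the distribution of the $\psi$-powers between the two nodal branches, and fixing the sign --- which requires a careful analysis of the $E$-twisted genus-one obstruction theory on the contracted component. One should also confirm that the main component is genuinely empty, so that no stable map with a rational tail sneaks into its closure and contributes to the reduced invariant.
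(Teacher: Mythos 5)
Your proposal follows essentially the same route as the paper: apply Zinger's standard-versus-reduced formula, observe that the reduced invariant vanishes because no degree-one stable map to $X$ can have a non-contracted genus-one component, and identify the boundary correction as the stated sum over $p$ of the two types of genus-zero invariants with common coefficient $-\frac1{24}$. The one step you flag as the main obstacle --- pinning down the exact coefficients and sign of the correction --- is resolved in the paper simply by citing Zinger's explicit formulas (2-8), (2-9) and (2-10) for the coefficients (both equal to $-\frac1{24}$), so no independent analysis of the twisted obstruction theory is required, and the detour through quantum Lefschetz on $\bP^{N+r}$ is unnecessary since Zinger's Theorem 1A applies directly to $X$.
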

\begin{proof}
We use Zinger's \emph{standard versus reduced} formula to derive our result, and we follow notations in Theorem 1A in \cite{Z} to briefly explain the computation. Firstly, the corresponding reduced genus-one invariant is zero, since there is no genus-one, degree-one stable map to $X$ without contracting a subcurve of arithmetic genus one. Secondly, we have $m=1$, since a genus-zero, degree-zero stable map to $X$ has at least three marked points. Thirdly, for $(m,J)=(1,\emptyset)$ and $(m,J)=(1,\{1\})$, the corresponding coefficients of relavant genus-zero Gromov-Witten invariants can be obtained from formula (2-9) and formula (2-8) in \cite{Z}, respectively, which are both $-\frac1{24}$. Finally, summing over $p$ as in formula (2-10) in \cite{Z} gives the required equality.
\end{proof}

\begin{lemma}\label{m10} 
\ban
\<H\>_{1,0}^X=-\frac1{24}\int_XH\cup c_{N-1}(X).
\nan
\end{lemma}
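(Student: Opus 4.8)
The plan is to evaluate $\langle H\rangle_{1,0}^X$ by the classical description of genus-one, degree-zero Gromov--Witten invariants. First I would observe that every degree-zero stable map is constant, so that $\overline{M}_{1,1}(X,0)$ is canonically identified with $\overline{M}_{1,1}\times X$, the evaluation map being the projection $\pi_X$ onto $X$. Under this identification the obstruction bundle is $\bE^\vee\boxtimes T_X$, where $\bE$ denotes the rank-one Hodge bundle on $\overline{M}_{1,1}$ and $T_X$ is the tangent bundle of $X$: at a point whose map has image $x\in X$ the obstruction space is $H^1(C,\cO_C)\otimes T_xX$, and $H^1(C,\cO_C)\cong H^0(C,\omega_C)^\vee=\bE_C^\vee$ by Serre duality. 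Since this bundle has rank $N=\dim X$, the virtual class is $e(\bE^\vee\boxtimes T_X)\cap[\overline{M}_{1,1}\times X]$, which has the expected dimension one, and
\[
\langle H\rangle_{1,0}^X=\int_{\overline{M}_{1,1}\times X}\pi_X^*H\cup e(\bE^\vee\boxtimes T_X).
\]

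Next I would compute the Euler class by the splitting principle: writing $\lambda_1:=c_1(\bE)$ and letting $x_1,\dots,x_N$ be the Chern roots of $T_X$, one gets
\[
e(\bE^\vee\boxtimes T_X)=\prod_{i=1}^N(x_i-\lambda_1)=\suml_{k=0}^N(-\lambda_1)^k\,c_{N-k}(X).
\]
Then I would use two elementary facts about $\overline{M}_{1,1}$: it is one-dimensional, so $\lambda_1^2=0$ and any class of complex degree $\ne1$ integrates to zero on it; and $\int_{\overline{M}_{1,1}}\lambda_1=\tfrac1{24}$. In the resulting sum the terms with $k\ge2$ vanish because $\lambda_1^2=0$, the term $k=0$ vanishes since it amounts to integrating the degree-zero class $1$ over $\overline{M}_{1,1}$ (equivalently since $H\cup c_N(X)\in H^{2N+2}(X)=0$), and only the term $k=1$ survives, yielding
\[
\langle H\rangle_{1,0}^X=-\Bigl(\int_{\overline{M}_{1,1}}\lambda_1\Bigr)\Bigl(\int_X H\cup c_{N-1}(X)\Bigr)=-\frac1{24}\int_X H\cup c_{N-1}(X),
\]
as asserted.

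I do not expect a genuine obstacle here: the computation is entirely standard, and the identity is in fact a special case of the well-known evaluation of genus-one degree-zero Gromov--Witten invariants, so it could equally well be quoted from the literature. The only points requiring a little care are getting the dual right in the obstruction bundle $\bE^\vee\boxtimes T_X$ and the degree bookkeeping on $\overline{M}_{1,1}$ that isolates precisely the term linear in $\lambda_1$.
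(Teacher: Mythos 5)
Your proof is correct and follows essentially the same route as the paper: identify $\overline M_{1,1}(X,0)$ with $X\times\overline M_{1,1}$, take the obstruction bundle $T_X\boxtimes\bE^\vee$ (the paper cites Section 2 of Getzler--Pandharipande for this), and integrate the Euler class, which the paper states in one line and you expand via the splitting principle and $\int_{\overline M_{1,1}}\lambda_1=\tfrac1{24}$. No substantive difference.
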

\begin{proof}
Note that the obstruction bundle of $\overline M_{1,1}(X,0)=X\times\overline M_{1,1}$ is $T_X\boxtimes\bE^\vee$ (see e.g. Section 2 in \cite{GP}). So we have
\ban
\<H\>_{1,0}^X&=&-\int_{\overline M_{1,1}}\lambda_1\int_XH\cup c_{N-1}(X)=-\frac1{24}\int_XH\cup c_{N-1}(X).
\nan
\end{proof}

So from \eqref{TRRs}, Lemma \ref{m11} and Lemma \ref{m10}, we have
\ba\label{twopoint}
N'\lambda&=&-\suml_{p=0}^{N-2}\bigg(\<\tau_p\Big(c_{N-2-p}(X)\Big)\tau_1(H)\>_{0,1}^X+\<\tau_p\Big(c_{N-2-p}(X)\cup H\Big)\>_{0,1}^X\bigg)\\
&&\quad+\frac{1}{d_1\cdots d_r}\<H^{N-1}\>_{0,1}^X\int_XH\cup c_{N-1}(X)-\frac{1}{d_1\cdots d_r}\suml_{i=0}^{N}\<H^i,H^{N-i}\>_{0,1}^X.\nonumber
\na

The RHS of \eqref{twopoint} can be expressed in terms of genus-zero one-point invariants. To this end, we need the following Lemma \ref{divisorrelation}.
\begin{lemma}\label{divisorrelation}
Let $Y$ be a nonsingular, projective, complex algebraic variety, $\beta$ an irreducible curve class of $Y$ and $L\in\Pic(Y)$. Then on $\overline M_{0,2}(Y,\beta)$, we have
\ban
ev_1^*(L)\cap[\overline M_{0,2}(Y,\beta)]^{vir}&=&\bigg(ev_2^*(L)+\int_\beta c_1(L)\psi_2\bigg)\cap[\overline M_{0,2}(Y,\beta)]^{vir},\\
\psi_1\cap[\overline M_{0,2}(Y,\beta)]^{vir}&=&-\psi_2\cap[\overline M_{0,2}(Y,\beta)]^{vir}.
\nan
\end{lemma}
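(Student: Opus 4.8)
The plan is to prove both identities by the standard comparison of tautological $\psi$-classes across the two forgetful morphisms from $\overline M_{0,2}(Y,\beta)$. Let $\pi\colon\overline M_{0,3}(Y,\beta)\to\overline M_{0,2}(Y,\beta)$ be the map forgetting the third marked point; recall that $\pi$ is identified with the universal curve over $\overline M_{0,2}(Y,\beta)$, and that $\overline M_{0,3}(Y,\beta)$ carries an evaluation map $ev_3$ to $Y$. The first step is to record the classical pullback formula for $\psi$-classes under forgetting a point: for $i=1,2$ one has $\psi_i=\pi^*\psi_i+[D_i]$ on $\overline M_{0,3}(Y,\beta)$, where $D_i$ is the boundary divisor along which the $i$-th and third points lie on a contracted rational bridge. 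Restricting $ev_3^*L$ to $D_i\cong\overline M_{0,2}(Y,\beta)$ gives $ev_i^*L$ (the contracted component maps to a point), which is the geometric input that links the evaluation classes at the two points.

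First I would prove the second identity (the $\psi_1=-\psi_2$ relation) since it is purely a statement about the moduli stack and its virtual class, with no line bundle involved. Push the relation $\psi_1+\psi_2=\pi^*(\psi_1+\psi_2)+[D_1]+[D_2]$ (capped against the pulled-back virtual class) forward by $\pi$: the summands $\pi^*\psi_i\cap\pi^*[\cdot]^{vir}$ push forward to zero by the projection formula because $\pi$ has one-dimensional fibers and $\psi_i$ restricts trivially to a generic fiber, while $\pi_*([D_i]\cap\pi^*[\cdot]^{vir})=[\overline M_{0,2}(Y,\beta)]^{vir}$ since $D_i$ is a section of $\pi$. A cleaner route, which I would use in the write-up, is to invoke the genus-zero dilaton/comparison: on $\overline M_{0,n}$ with $n\ge 3$, forgetting down to $\overline M_{0,3}=\mathrm{pt}$ makes every $\psi$-class a pullback of a boundary combination, and for $n=2$ stable maps the same bookkeeping on the contracted-ghost components yields $\psi_1+\psi_2=0$ after capping with $[\overline M_{0,2}(Y,\beta)]^{vir}$; this is exactly Lee--Pandharipande's divisor relation and I would simply cite it, as the paper does for the analogous statements.

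For the first identity, apply $\pi_*$ to the relation $ev_1^*L=ev_2^*L+c_1(L)\cdot(\text{something})$ obtained as follows: on $\overline M_{0,3}(Y,\beta)$ one has $ev_1^*L-ev_3^*L=$ (a boundary correction supported on $D_1$) by the see-saw/divisor argument, and symmetrically for the index $2$. Capping with $\pi^*[\overline M_{0,2}(Y,\beta)]^{vir}$, pushing forward, and using $\pi_*([D_2]\cap\psi_3^{?}\cdots)$ identities converts the $ev_3$ terms into the integral $\int_\beta c_1(L)$ times $\psi_2$. Concretely, the coefficient $\int_\beta c_1(L)$ arises because $c_1(L)$ integrated over the one-dimensional image curve contributes a factor equal to the degree of $L$ on $\beta$; the $\psi_2$ appears as the class of the locus where the forgotten point collides with the second point. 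The details are a routine manipulation with the string/divisor equations for descendent Gromov--Witten classes on $\overline M_{0,2}(Y,\beta)$, so I would present it compactly and refer to \cite{LP} for the precise form.

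The main obstacle is purely organizational rather than mathematical: one must be careful that all identities are asserted at the level of \emph{virtual} classes, i.e.\ every boundary-pushforward step uses the compatibility of the virtual class with the forgetful morphism $\pi$ (flatness/base-change for the perfect obstruction theory), and that the combinatorics of which boundary divisor restricts to which evaluation class is tracked consistently. Since $\beta$ is assumed irreducible, no multi-component degenerations of $\beta$ itself intervene, which keeps the boundary contributions to exactly the ghost-bubble divisors $D_i$; this is what makes the clean two-term formulas hold. I expect the cleanest exposition is to quote Lee--Pandharipande directly, as both identities are special cases of their divisor relations on spaces of stable maps.
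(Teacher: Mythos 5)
Your proposal lands in the same place as the paper: Lemma \ref{divisorrelation} is proved there in one line by citing Corollary 1 of \cite{LP} and observing that the extra correction terms (from distributing the curve class and markings over separating boundary divisors) vanish, because a genus-zero, degree-zero component with only two special points is unstable --- which is the precise form of your remark that irreducibility of $\beta$ kills all such degenerations. Your intermediate universal-curve sketch is looser than a proof (pushing $\psi_1+\psi_2=\pi^*(\psi_1+\psi_2)+[D_1]+[D_2]$ forward gives the dilaton-type identity, not the claimed relation on the base), but since you explicitly fall back on citing Lee--Pandharipande, the argument you would actually write is essentially identical to the paper's.
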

\begin{proof}
This is a special case of Lee-Pandharipande's \emph{divisor relations} (see Corollary 1 in \cite{LP}). Here we do not have the terms coming from distributing marked points and degrees, since a degree-zero, genus-zero stable map to $X$ has at least three marked points.
\end{proof}

Now we use Lemma \ref{divisorrelation} to reduce two-point invariants on RHS of \eqref{twopoint} to one-point invariants.

\begin{lemma}\label{cherntwopoint}
\ban
\<\tau_p\Big(c_{N-2-p}(X)\Big)\tau_1(H)\>_{0,1}^X+\<\tau_p\Big(c_{N-2-p}(X)\cup H\Big)\>_{0,1}^X=-\<\tau_{p+1}\Big(c_{N-2-p}(X)\Big)\>_{0,1}^X.
\nan
\end{lemma}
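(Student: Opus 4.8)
The plan is to express $\<\tau_p(c_{N-2-p}(X))\tau_1(H)\>_{0,1}^X$ as an integral against $[\overline M_{0,2}(X,1)]^{vir}$ and then collapse it to one-pointed invariants by applying the two identities of Lemma \ref{divisorrelation} together with the string equation. Write $c:=c_{N-2-p}(X)$, so that
\[
\<\tau_p(c)\tau_1(H)\>_{0,1}^X=\int_{[\overline M_{0,2}(X,1)]^{vir}}\psi_1^p\,ev_1^*(c)\cdot\psi_2\,ev_2^*(H).
\]
Any degree-one stable map to $X\subset\bP^{N+r}$ has image a line $\ell$; since $H_2(X,\bZ)/tor\cong\bZ$ (as $N\geq3$), the curve class of $\ell$ is irreducible in the sense of Lemma \ref{divisorrelation}, and $\int_\ell c_1(\cO_X(1))=\int_\ell H=1$. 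So Lemma \ref{divisorrelation}, with $Y=X$, $\beta=[\ell]$ and $L=\cO_X(1)$, gives, as identities of cycles capping $[\overline M_{0,2}(X,1)]^{vir}$,
\[
ev_2^*(H)=ev_1^*(H)-\psi_2,\qquad \psi_2=-\psi_1,\qquad \psi_2^2=\psi_1^2.
\]

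The computation then runs as follows. Using the first identity, $\psi_2\,ev_2^*(H)=\psi_2\,ev_1^*(H)-\psi_2^2$; using the other two, this equals $-\psi_1\,ev_1^*(H)-\psi_1^2$ on the virtual cycle. Substituting back gives
\[
\<\tau_p(c)\tau_1(H)\>_{0,1}^X=-\int_{[\overline M_{0,2}(X,1)]^{vir}}\psi_1^{p+1}\,ev_1^*(c\cup H)-\int_{[\overline M_{0,2}(X,1)]^{vir}}\psi_1^{p+2}\,ev_1^*(c).
\]
Each integral on the right is a two-pointed invariant in which the second marked point carries only the fundamental class; applying the string equation to forget that point (valid since $p\geq0$, so that $p+1,p+2\geq1$) turns them into $\<\tau_p(c\cup H)\>_{0,1}^X$ and $\<\tau_{p+1}(c)\>_{0,1}^X$, respectively. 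Hence
\[
\<\tau_p(c)\tau_1(H)\>_{0,1}^X=-\<\tau_p(c\cup H)\>_{0,1}^X-\<\tau_{p+1}(c)\>_{0,1}^X,
\]
which is the claimed identity after rearranging.

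I do not expect a real obstacle: the lemma is a formal consequence of Lemma \ref{divisorrelation} and the string equation, both already available. The one point requiring care is the order of the substitutions. Applying the divisor relation $ev_1^*(H)=ev_2^*(H)+\psi_2$ by itself leaves a $\psi_2^2$-term at the second marked point, and trying to reduce that term by pushing forward along the forgetful morphism $\overline M_{0,2}(X,1)\to\overline M_{0,1}(X,1)$ would introduce a spurious $\kappa_1$-class and fail to close the computation. Using $\psi_2=-\psi_1$ first moves every descendant class onto the first marked point, so that the second point is left carrying only the fundamental class and the string equation finishes the argument at once.
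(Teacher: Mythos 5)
Your argument is correct and is essentially the paper's own proof: both reduce the two-point invariant using the two identities of Lemma \ref{divisorrelation} on $\overline M_{0,2}(X,1)$ and then strip off the second marked point. The only (cosmetic) difference is that you move every class onto the first marked point and finish with the string equation, whereas the paper first trades $\psi_2$ for $-\psi_1$ to get $-\<\tau_{p+1}(c_{N-2-p}(X))\tau_0(H)\>_{0,1}^X$ and then applies the descendant divisor axiom.
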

\begin{proof}
We have
\ban
&&\<\tau_p\Big(c_{N-2-p}(X)\Big)\tau_1(H)\>_{0,1}^X+\<\tau_p\Big(c_{N-2-p}(X)\cup H\Big)\>_{0,1}^X\\
&=&-\<\tau_{p+1}\Big(c_{N-2-p}(X)\Big)\tau_0(H)\>_{0,1}^X+\<\tau_p\Big(c_{N-2-p}(X)\cup H\Big)\>_{0,1}^X\\
&=&-\<\tau_{p+1}\Big(c_{N-2-p}(X)\Big)\>_{0,1}^X-\<\tau_p\Big(c_{N-2-p}(X)\cup H\Big)\>_{0,1}^X+\<\tau_p\Big(c_{N-2-p}(X)\cup H\Big)\>_{0,1}^X\\
&=&-\<\tau_{p+1}\Big(c_{N-2-p}(X)\Big)\>_{0,1}^X.
\nan
Here we use Lemma \ref{divisorrelation} to derive the first equality, and we use the divisor axiom to derive the second equality. 
\end{proof}

\begin{lemma}\label{puretwopoint}
\ban
\<\tau_0(H^i)\tau_a(H^{N-i-a})\>_{0,1}^X=\suml_{p=0}^i{i\choose p}\<\tau_{a+p-1}(H^{N-a-p})\>_{0,1}^X.
\nan
\end{lemma}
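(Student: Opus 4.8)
The plan is to apply Lemma \ref{divisorrelation} repeatedly to move the divisor class $H^i$ from the first marked point to the second marked point, one hyperplane factor at a time, keeping careful track of the $\psi$-classes that get generated along the way. Writing $H^i = H \cdot H^{i-1}$ and viewing the first insertion as $ev_1^*(H)$ capped against a class supported on $\overline M_{0,2}(X,1)$, the first relation in Lemma \ref{divisorrelation} (applied with $L = H$, $\beta = 1$, so $\int_\beta c_1(H) = 1$) lets us replace $ev_1^*(H)$ by $ev_2^*(H) + \psi_2$. Iterating this $i$ times — each time peeling off one factor of $H$ from the first point — produces a binomial expansion: after moving all $i$ copies of $H$ we obtain $\sum_{p=0}^i \binom{i}{p}$ times the invariant with $ev_2^*(H^{i-p})$ and $\psi_2^{p}$ accumulated at the second point, against the original $\tau_a(H^{N-i-a})$ insertion there. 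The combinatorial factor $\binom{i}{p}$ is exactly the number of ways of choosing which $p$ of the $i$ transferred factors contribute a $\psi_2$ rather than an $ev_2^*(H)$.

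After this transfer the first marked point carries no cohomology insertion (only $\psi_1^0 = 1$, since the original first insertion was $\tau_0$), so the resulting one-point-with-spectator configuration is $\langle \tau_0(1)\, \tau_{a+p}(H^{N-i-a} \cdot H^{i-p}) \rangle_{0,1}^X = \langle \tau_0(1)\, \tau_{a+p}(H^{N-a-p}) \rangle_{0,1}^X$. Now the divisor/string-type reduction for the fundamental class applies: a genus-zero, degree-one stable map to $X$ has a well-defined forgetful morphism dropping the marked point with the trivial $\tau_0(1)$ insertion, and the standard string equation (equivalently, the second relation in Lemma \ref{divisorrelation} together with the $\psi$-comparison under pullback) gives $\langle \tau_0(1)\, \tau_{a+p}(H^{N-a-p}) \rangle_{0,1}^X = \langle \tau_{a+p-1}(H^{N-a-p}) \rangle_{0,1}^X$, the $\psi$-exponent dropping by one. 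Substituting this into the binomial sum yields exactly $\sum_{p=0}^i \binom{i}{p} \langle \tau_{a+p-1}(H^{N-a-p}) \rangle_{0,1}^X$, as claimed. (Any term with $a + p - 1 < 0$, which can only occur when $a = 0$ and $p = 0$, is interpreted as $0$ by the usual convention, consistent with the string equation killing it.)

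The main technical point to get right is the bookkeeping in the iterated application of Lemma \ref{divisorrelation}: one must check that the $\psi_2$-classes produced at intermediate stages do not interfere with subsequent transfers — i.e. that moving the next factor of $H$ across is still governed by the same relation even in the presence of already-accumulated powers of $\psi_2$ at the second point. This is fine because Lemma \ref{divisorrelation} is an identity of classes on $\overline M_{0,2}(X,1)$, so it may be capped against any further $\psi_2^k$ without change, and the resulting algebra is just the binomial theorem for $(ev_2^*(H) + \psi_2)^i$. I expect the only real care needed is the degenerate boundary case $i = 0$ (where the statement is the trivial identity $\langle \tau_0(1)\tau_a(H^{N-a})\rangle_{0,1}^X = \langle \tau_{a-1}(H^{N-a})\rangle_{0,1}^X$, i.e. the plain string equation) and the sign/convention for $\tau_{-1}$ terms; everything else is formal.
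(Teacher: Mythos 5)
Your proof is correct and takes essentially the same route as the paper: the paper applies the first relation of Lemma \ref{divisorrelation} once per step to get the Pascal-type recursion $\<\tau_0(H^i)\tau_a(H^{N-i-a})\>_{0,1}^X=\<\tau_0(H^{i-1})\tau_a(H^{N-i+1-a})\>_{0,1}^X+\<\tau_0(H^{i-1})\tau_{a+1}(H^{N-i-a})\>_{0,1}^X$ and inducts on $i$, whereas you expand $\big(ev_2^*(H)+\psi_2\big)^i$ all at once by the binomial theorem --- the same computation, differently packaged, with the fundamental class axiom (string equation) disposing of the remaining $\tau_0(1)$ in both versions. (One small quibble: the string equation is not ``equivalently'' the second relation of Lemma \ref{divisorrelation}; it is the separate fundamental class axiom the paper invokes for the $i=0$ base case, but since your primary justification is the string equation itself, this does not affect correctness.)
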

\begin{proof}
For $i=0$, the required equality holds by the fundamental class axiom. For $i>0$, from Lemma \ref{divisorrelation}, we have
\ban
\<\tau_0(H^i)\tau_a(H^{N-i-a})\>_{0,1}^X=\<\tau_0(H^{i-1})\tau_a(H^{N-(i-1)-a})\>_{0,1}^X+\<\tau_0(H^{i-1})\tau_{a+1}(H^{N-(i-1)-(a+1)})\>_{0,1}^X.
\nan
Now we use induction on $i=0,1,2,3,\cdots$ to obtain the required result.
\end{proof}

So from \eqref{twopoint}, Lemma \ref{cherntwopoint} and Lemma \ref{puretwopoint}, we obtain
\ba\label{onepoint}
N'\lambda&=&\suml_{p=0}^{N-2}\<\tau_{p+1}\Big(c_{N-2-p}(X)\Big)\>_{0,1}^X+\frac{1}{d_1\cdots d_r}\<H^{N-1}\>_{0,1}^X\int_XH\cup c_{N-1}(X)\\
&&\quad-\frac{1}{d_1\cdots d_r}\suml_{i=0}^{N}\suml_{p=0}^i{i\choose p}\<\tau_{p-1}(H^{N-p})\>_{0,1}^X.\nonumber
\na
To manipulate RHS of \eqref{onepoint}, we introduce integers $c_p$'s and $I_p$'s as follows. We define $c_p$'s by
\ban
\frac{(1+x)^{N+r+1}}{\prodl_{i=1}^r(1+d_ix)}=\suml_{p=0}^\infty c_px^p.
\nan
Then
\ba
c(X)&=&\suml_{p=0}^{N}c_pH^p,\nonumber\\
\chi_{top}(X)&=&(d_1\cdots d_r)c_N.\label{topeuler}
\na
We define $I_p$'s by
\ban
(d_1\cdots d_r)(d_1!\cdots d_r!)\bigg[\frac{\prodl_{i=1}^r\prodl_{m=1}^{d_i}(1+\frac{d_i}{m}x)}{(1+x)^{N+r+1}}-1\bigg]=\suml_{a=0}^{\infty}I_ax^a.
\nan
Then the \emph{mirror formula} for $X$ (see Theorem 10.7 in \cite{Gi}) implies that $I_a=\<\tau_{a-1}(H^{N-a})\>_{0,1}^X$ for $0\leq a\leq N$. In particular, $I_0=0$. Using $c_p$'s and $I_p$'s, we see from \eqref{onepoint} that
\ban
N'\lambda&=&\suml_{p=0}^{N-2}c_{N-p-2}I_{p+2}+c_{N-1}I_1-\frac{1}{d_1\cdots d_r}\suml_{i=0}^{N}\suml_{p=0}^i{i\choose p}I_p\\
&=&\suml_{p=0}^{N}c_{N-p}I_{p}-\frac{1}{d_1\cdots d_r}\suml_{p=0}^{N}\suml_{i=p}^{N}{i\choose p}I_p\\
&=&\suml_{p=0}^{N}\bigg[c_{N-p}-\frac{1}{d_1\cdots d_r}{N+1\choose p+1}\bigg]I_p.
\nan
Here we use $I_0=0$ in the second equality. So 
\ban
N'\lambda=\Coeff_{x^{N}}\big(g(x)\big),
\nan
where
\ban
g(x)=\bigg[\frac{(1+x)^{N+r+1}}{\prodl_{i=1}^r(1+d_ix)}-\frac{(1+x)^{N+1}}{d_1\cdots d_r}\bigg]\cdot(d_1\cdots d_r)(d_1!\cdots d_r!)\bigg[\frac{\prodl_{i=1}^r\prodl_{m=1}^{d_i}(1+\frac{d_i}{m}x)}{(1+x)^{N+r+1}}-1\bigg].
\nan
Here for a rational function $R(x)$ holomorphic at $x=0$, we use its Taylor expansion at the origin $R(x)=\suml_{k=0}^\infty R_kx^k$ to set
\ban
\Coeff_{x^k}\big(R(x)\big):=R_k,\quad k\in\bZ_{\geq0}.
\nan
Now direct calculation gives
\ban
g(x)&=&(d_1\cdots d_r)(d_1!\cdots d_r!)\bigg[\prodl_{i=1}^r\prodl_{m=2}^{d_i}(1+\frac{d_i}m x)-\frac{(1+x)^{N+r+1}}{\prodl_{i=1}^r(1+d_ix)}\\
&&\quad-\frac{1}{d_1\cdots d_r}\prodl_{i=1}^r\prodl_{m=1}^{d_i-1}(1+\frac{d_i}m x)+\frac{(1+x)^{N+1}}{d_1\cdots d_r}\bigg].
\nan
Note that $N=d_1+\cdots+d_r-r$, and then we have
\ban
N'\lambda&=&\Coeff_{x^{N}}\big(g(x)\big)\\
&=&(d_1\cdots d_r)(d_1!\cdots d_r!)\bigg[\prodl_{i=1}^r\prodl_{m=2}^{d_i}\frac{d_i}m-\Coeff_{x^{N}}\bigg(\frac{(1+x)^{d_1+\cdots+d_r+1}}{\prodl_{i=1}^r(1+d_ix)}\bigg)\\
&&\quad-\frac{1}{d_1\cdots d_r}\prodl_{i=1}^r\prodl_{m=1}^{d_i-1}\frac{d_i}m+\frac1{d_1\cdots d_r}{N+1\choose N}\bigg]\\
&=&(d_1!\cdots d_r!)\bigg[(N+1)-(d_1\cdots d_r)\Coeff_{x^{N}}\bigg(\frac{(1+x)^{d_1+\cdots+d_r+1}}{\prodl_{i=1}^r(1+d_ix)}\bigg)\bigg]\\
&=&(d_1!\cdots d_r!)\bigg[(N+1)-\chi_{top}(X)\bigg]\\
&=&(d_1!\cdots d_r!)(-N').
\nan
We use \eqref{topeuler} in the fourth equality, and use \eqref{dimofprim} in the last equality. Since $N'>0$, it follows that $\lambda=-d_1!\cdots d_r!$, which verifies \eqref{lambda}. This finishes the proof of Conjecture $\cO$ in the case (iii).

\section{A conjecture of Galkin}

We follow notations in Section 2.

Galkin \cite{Ga} conjectured that, for a Fano manifold $F$, $T(F)\geq\dim_\bC F+1$, with equality only if $F$ is a projective space. This conjecture was verified for del Pezzo surfaces \cite{HKLY}. Together with $T(\bP^N)=N+1$, the following Lemma \ref{Galkinconjecture} verifies Galkin's conjecture for Fano complete intersections of dimension at least three. 

\begin{lemma}\label{Galkinconjecture}
$T(X)>N+1$. 
\end{lemma}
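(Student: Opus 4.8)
The plan is to use Lemma~\ref{spectrum} to read off $T(X)$ explicitly in each of the three cases, and then reduce the inequality $T(X)>N+1$ to an elementary estimate on the degrees $d_1,\dots,d_r$ subject to the index-one-or-more constraint $\rho=N+r+1-d_1-\cdots-d_r\geq 1$, i.e. $d_1+\cdots+d_r\leq N+r$, together with $d_i\geq 2$ and $N\geq 3$.

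First I would treat the case $\rho>1$. By Lemma~\ref{spectrum} the spectral radius is $T(X)=(d_1^{d_1}\cdots d_r^{d_r})^{1/\rho}\rho$, so the claim becomes $(d_1^{d_1}\cdots d_r^{d_r})^{1/\rho}\rho>N+1$. Writing $D=d_1+\cdots+d_r$ we have $N+1=D-r+\rho$. Since each $d_i\geq 2$, we have $d_i^{d_i}\geq 2^{d_i}$, hence $d_1^{d_1}\cdots d_r^{d_r}\geq 2^{D}$, and it would suffice to show $2^{D/\rho}\rho>D-r+\rho$. I would then use $D\geq 2r$ (so $D-r\leq D/2$) together with $D-r+1\leq N+1$... actually more cleanly: set $m=D/\rho\geq 1$ is not an integer in general, so instead I would bound directly. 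Because $d_i^{d_i}\geq 4\cdot 2^{d_i-2}$ and more usefully $d_i^{d_i}\geq d_i^2\cdot 2^{d_i-2}$ is wasteful; the cleanest route is the convexity/AM–GM bound $d_1^{d_1}\cdots d_r^{d_r}\geq (\text{something})$. Concretely I expect the key inequality to be: for $d\geq 2$, $d^d\geq 2^{2d-2}$ fails for $d=3$ ($27<16$? no, $27>16$, fine) — in fact $d^d\geq 2^d\cdot d$ for $d\geq 2$, giving $\prod d_i^{d_i}\geq 2^{D}\prod d_i\geq 2^D\cdot 2^r$, hence $T(X)\geq 2^{(D+r)/\rho}\rho$. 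Since $\rho\leq N$ and $D+r\geq N+1+\rho-1+... $ — I would track the exact exponent and conclude via $2^t\geq 1+t$-type estimates that $2^{(D+r)/\rho}\rho\geq (D+r)+\rho> D-r+\rho=N+1$, using $r\geq 1$. The monotonicity of $2^{s}/s$ for $s\geq 2$ and the bound $(D+r)/\rho\geq 2$ should make this routine.

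Next I would handle the case $\rho=1$, where by Lemma~\ref{spectrum} the spectrum is $\{-d_1!\cdots d_r!,\ d_1^{d_1}\cdots d_r^{d_r}-d_1!\cdots d_r!\}$, so $T(X)=\max\{d_1!\cdots d_r!,\ d_1^{d_1}\cdots d_r^{d_r}-d_1!\cdots d_r!\}$. Since $d^d\geq 2\cdot d!$ for $d\geq 2$ (indeed $d^d/d!\geq 2$, with equality only at $d=2$; and the product over $i$ makes the ratio $\geq 2^r$), we get $d_1^{d_1}\cdots d_r^{d_r}-d_1!\cdots d_r!\geq (2^r-1)\,d_1!\cdots d_r!\geq d_1!\cdots d_r!$, so $T(X)=d_1^{d_1}\cdots d_r^{d_r}-d_1!\cdots d_r!$. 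Now $\rho=1$ forces $N+1=d_1+\cdots+d_r-r+1=D-r+1$, and I would need $d_1^{d_1}\cdots d_r^{d_r}-d_1!\cdots d_r!>D-r+1$. Bounding $d_1^{d_1}\cdots d_r^{d_r}\geq 2^D\cdot 2^r$ as above and $d_1!\cdots d_r!\leq $ (a crude bound, e.g. $\leq \prod d_i^{d_i}/2^r$), one gets $T(X)\geq (1-2^{-r})2^{D+r}\geq 2^{D+r-1}$, which dwarfs $D-r+1$ for $D\geq 4$ (note $\rho=1$ with $N\geq 3$ forces $D-r=N\geq 3$, and $d_i\geq 2$ forces $D\geq 2r\geq r+1$, hence $D\geq 4$). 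A short numeric check disposes of the finitely many edge cases if the clean inequality is tight.

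The main obstacle I anticipate is organizing the estimate uniformly in $r$ so that a single elementary inequality covers all cases without an annoying case split at $r=1$ versus $r\geq 2$, and making sure the chosen lower bound for $\prod d_i^{d_i}$ (or $\prod d_i^{d_i}-\prod d_i!$) is strong enough even when many $d_i$ equal $2$, since that is the regime where $d^d/d!$ and $d^d/2^d$ are smallest. I would resolve this by isolating the one-variable lemma $d^d\geq 2^d\cdot d$ (equivalently $d^{d-1}\geq 2^d$, true for $d\geq 2$), taking the product, and then comparing $2^{D+r}\cdot(\text{factor})$ against $N+1=D-r+\rho$ using only $\rho\leq D-r$ and $r\geq 1$; everything else is bookkeeping. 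I would also double-check the degenerate possibility that some bound forces $N<3$, which is excluded by hypothesis, so no genuinely exceptional case survives.
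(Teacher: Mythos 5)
Your case $\rho=1$ argument is essentially sound (and in fact simpler than the paper's: since $D:=d_1+\cdots+d_r=N+r$ when $\rho=1$, the bound $d_1^{d_1}\cdots d_r^{d_r}-d_1!\cdots d_r!\geq\frac12 d_1^{d_1}\cdots d_r^{d_r}\geq 2^{D-1}\geq 2^N>N+1$ already suffices, using only $d^d\geq 2^d$ and $d^d\geq 2\cdot d!$ for $d\geq2$; the paper instead telescopes $xy>x+y-1$ to get $d_1\cdots d_r>N+1$). But the case $\rho>1$ has a genuine gap. Your ``key one-variable lemma'' $d^d\geq 2^d\cdot d$, i.e.\ $d^{d-1}\geq 2^d$, is false precisely at $d=2$ ($2^1=2<4=2^2$), which is the extremal regime you yourself flag as the dangerous one. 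And the correct weaker bound $d^d\geq 2^d$, giving $T(X)\geq 2^{D/\rho}\rho$, is not strong enough: for $r=1$, $d_1=6$, $N=16$ (so $\rho=12$) one has $2^{D/\rho}\rho=2^{1/2}\cdot 12\approx 16.97<17=N+1$, so the inequality $2^{D/\rho}\rho>D-r+\rho$ you would need simply fails. (Your concluding step also silently assumes $2^x\geq 1+x$ for $x=(D+r)/\rho$, which fails for $0<x<1$, e.g.\ for quadric hypersurfaces where $x=3/N$.)

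The paper avoids this by using a sharper lower bound: convexity of $x\mapsto x\log x$ (weighted AM--GM) gives $d_1^{d_1}\cdots d_r^{d_r}\geq (D/r)^{D}$ with $D=N+1-\rho+r$, hence $T(X)\geq\bigl(1+\tfrac{N+1-\rho}{r}\bigr)^{(r+N+1-\rho)/\rho}\rho$; this is then minimized over $r\in[1,N+1-\rho]$ at the endpoint $r=N+1-\rho$ (all $d_i=2$), yielding $T(X)\geq 4^{(N+1-\rho)/\rho}\rho$, and finally the function $x\mapsto 4^{(N+1-x)/x}x$ is shown to be strictly decreasing on $[2,N+1]$ with value $N+1$ at $x=N+1$. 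The point is that the base of the exponential must be allowed to grow with $D/r$ rather than being frozen at $2$; replacing $(D/r)^{D}$ by $2^{D}$ loses exactly the margin needed when $r$ is small and the $d_i$ are moderately large. To repair your argument you would need to reinstate some such $r$-dependent bound (or redo the two-variable optimization in $(\rho,r)$ as the paper does); the purely base-$2$ estimate cannot be patched.
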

\begin{proof}
If $1<\rho\leq N$, then $T(X)=(d_1^{d_1}\cdots d_r^{d_r})^{\frac 1\rho}\rho$. From the convexity of the function $x\mapsto x\log x$ with $x>0$, we have
\ban
T(X)\geq(1+\frac{N+1-\rho}{r})^{\frac{r+N+1-\rho}{\rho}}\rho.
\nan
Note that $1\leq r\leq N+1-\rho$. Since the function $x\mapsto (1+\frac{N+1-\rho}{x})^{x+N+1-\rho}$ with $1\leq x\leq N+1-\rho$ is decreasing, it follows that
\ban
T(X)\geq4^{\frac{N+1-\rho}{\rho}}\rho.
\nan
The function $x\mapsto 4^{\frac{N+1-x}{x}}x$ with $2\leq x\leq N+1$ is strictly decreasing, and as a consequence, we have $T(X)>N+1$.

If $\rho=1$, then 
\ban
T(X)=d_1\cdots d_r\big(d_1^{d_1-1}\cdots d_r^{d_r-1}-(d_1-1)!\cdots(d_r-1)!\big)>d_1\cdots d_r.
\nan
For the case $r=1$, we have $d_1=N+1$, and the required inequality follows. For the case $r>1$, note that $xy>x+y-1$ for $x,y>1$, and therefore, 
\ban
T(X)&>&(d_1+d_2-1)d_3\cdots d_r\\
&>&(d_1+d_2+d_3-2)d_4\cdots d_r\\
&>&\cdots\\
&>&d_1+\cdots+d_r-(r-1)=N+1.
\nan
\end{proof}

\if{
\section{Lower bounds of the spectral radius}

Let $Q^N$ be the smooth quadric hypersurface in $\bP^{N+1}$.

\begin{proposition}
Let $X$ be a Fano complete intersection with dimension $N\geq3$ and index $\rho$, and let $T$ be the spectral radius of $(c_1(X)\star_0)$. Then:
\begin{enumerate}
\item if $\rho\leq N+1$, then $\frac T\rho\geq1$, with equality only if $\rho=N+1$, i.e. $X\cong\bP^N$;
\item if $\rho\leq N$, then $\frac T\rho\geq 4^{\frac 1N}$, with equality only if $\rho=N$, i.e. $X\cong Q^N$.
\end{enumerate}
\end{proposition}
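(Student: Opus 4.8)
The plan is to reduce the statement to elementary inequalities among the integers $d_1,\dots,d_r\ge 2$, $r\ge 1$ and $1\le\rho\le N$, using the explicit description of the spectrum of $(c_1(X)\star_0)$ in Lemma \ref{spectrum}, which makes both $T:=T(X)$ and the normalized quantity $T/\rho$ completely explicit. Set $D:=d_1\cdots d_r$ and $\Delta:=d_1^{d_1}\cdots d_r^{d_r}$. Since a genuine complete intersection has $r\ge 1$, and then $\rho=N+r+1-\sum_i d_i\le N$ because $\sum_i d_i\ge 2r$, the value $\rho=N+1$ can only come from $r=0$, i.e. $X\cong\bP^N$ with $T=N+1=\rho$; similarly $\rho=N$ forces $r=1$, $d_1=2$, i.e. $X\cong Q^N$ with $T=4^{1/N}N$. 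So the two equality cases are geometrically rigid as soon as one reaches them, and the real work is the two inequalities together with the verification that nothing else is extremal.

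I would first handle $\rho>1$. Here Lemma \ref{spectrum} gives $T/\rho=\Delta^{1/\rho}$, hence $\log(T/\rho)=\frac1\rho\sum_{i=1}^r d_i\log d_i$. The only input is that $x\mapsto x\log x$ is increasing on $[2,\infty)$, so $d_i\log d_i\ge 2\log 2$ with equality precisely when $d_i=2$; summing yields $\log(T/\rho)\ge\frac{2r}{\rho}\log 2$. For part (1) the right-hand side is positive since $r\ge 1$, so $T/\rho>1$. For part (2) it is enough that $\frac{2r}{\rho}\ge\frac{2}{N}$, i.e. $rN\ge\rho$, which is immediate from $r\ge 1$ and $\rho\le N$. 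Tracing equality in (2): $T/\rho=4^{1/N}$ forces both $rN=\rho$ and $d_i=2$ for every $i$; with $\rho\le N$ this forces $r=1$, hence $d_1=2$ and $\rho=N$, i.e. $X\cong Q^N$.

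Next I would handle $\rho=1$, where Lemma \ref{spectrum} gives the two-element spectrum $\{-P,\ \Delta-P\}$ with $P:=d_1!\cdots d_r!$. From $d^d\ge d\cdot d!$ (since $d^d/d!=\prod_{k=1}^d(d/k)\ge d$) we get $\Delta\ge DP\ge 2P$, so $\Delta-P$ is the eigenvalue of larger modulus and $T/\rho=T=\Delta-P\ge(D-1)P\ge 2$. Hence $T/\rho>1$, and since $N\ge 3$ gives $4^{1/N}\le 4^{1/3}<2$, also $T/\rho>4^{1/N}$; thus $\rho=1$ never produces equality in either part. Putting the cases together: in (1), $T/\rho=1$ is impossible for $\rho=1$ and for $\rho>1$, so it forces $\rho=N+1$, i.e. $X\cong\bP^N$; in (2) the equality analysis above pins down $X\cong Q^N$.

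I do not anticipate a real obstacle. The only points needing a little care are (a) identifying which of the two eigenvalues is the spectral radius when $\rho=1$, for which the bound $d^d\ge d\cdot d!$ is exactly what is needed, and (b) carrying the equality conditions faithfully through the chain of inequalities in the case $\rho>1$. Both are routine, and incidentally this argument sharpens Lemma \ref{Galkinconjecture}, which used only the weaker consequence of Jensen's inequality.
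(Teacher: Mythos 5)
Your proof is correct: every step checks out, including the identification of the two equality cases, the termwise bound $d_i\log d_i\ge 2\log 2$ with its equality condition, the reduction of part (2) to $rN\ge\rho$, and the $\rho=1$ analysis. Be aware, however, that the paper never actually proves this proposition --- it appears only in a commented-out section with no argument attached --- so the only meaningful comparison is with the proof of Lemma \ref{Galkinconjecture} ($T(X)>N+1$), which starts from the same Lemma \ref{spectrum} but runs a genuinely different optimization. There, for $1<\rho\le N$ the author applies convexity of $x\mapsto x\log x$ to replace all the $d_i$ by their average, then uses monotonicity in $r$ and finally in $\rho$ to reach $4^{(N+1-\rho)/\rho}\rho>N+1$; for $\rho=1$ he telescopes $xy>x+y-1$ to get $T>d_1+\cdots+d_r-(r-1)=N+1$. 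Your route is more elementary at each stage, because the target $4^{1/N}\rho$ is much weaker than $N+1$ when $\rho$ is small, and it makes the equality analysis (all $d_i=2$, $r=1$, $\rho=N$) completely transparent; you also supply a detail that the paper's $\rho=1$ argument glosses over, namely that $d_1^{d_1}\cdots d_r^{d_r}-d_1!\cdots d_r!$, and not $-d_1!\cdots d_r!$, is the eigenvalue of largest modulus, which your bound $d^d\ge d\cdot d!$ settles cleanly. One quibble: your closing remark that this argument ``sharpens'' Lemma \ref{Galkinconjecture} is not accurate --- the bound $T\ge 4^{1/N}\rho$ does not imply $T>N+1$ when $\rho$ is small relative to $N$, so the proposition and that lemma are logically independent lower bounds that happen to share a method.
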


}\fi

{\bf Acknowledgements.}
The author would like to thank Xiaowen Hu for enlightening discussions on Gromov-Witten invariants with primitive insertions, and Changzheng Li for helpful discussions on Conjecture $\cO$. The author is grateful to Jianxun Hu for constant encouragement and support.


\begin{thebibliography}{999}

\bibitem[CL]{CL}D. Cheong, C. Li, \emph{On the conjecture $\cO$ of GGI for $G/P$}, Adv. Math. \textbf{306}(2017), 704-721.

\bibitem[Ga]{Ga}S. Galkin, \emph{The conifold point}, arXiv:1404.7388.

\bibitem[Ge]{Ge}E. Getzler, \emph{Topological recursion relations in genus $2$}, Integrable systems and algebraic geometry (Kobe/Kyoto, 1997), 73-106, World Sci. Publ., River Edge, NJ, 1998.

\bibitem[Gi]{Gi}A. Givental, \emph{Equivariant Gromov-Witten invariants}, Int. Math. Res. Not. \textbf{13}(1996), 613-663.

\bibitem[GGI]{GGI}S. Galkin, V. Golyshev, H. Iritani, \emph{Gamma classes and quantum cohomology of Fano manifolds: Gamma conjecture}, Duke Math. J. \textbf{165}(11)(2016), 2005-2077.

\bibitem[GI]{GI}S. Galkin, H. Iritani, \emph{Gamma conjecture via mirror symmetry}, arXiv:1508.00719v2.

\bibitem[GP]{GP}E. Getzler, R. Pandharipande, \emph{Virasoro constraints and the Chern classes of the Hodge bundle}, Nuclear Physics B \textbf{530}(1998), 701-714.

\bibitem[H]{H}X. Hu, \emph{Computing the quantum cohomology of primitive classes}, arXiv:1501.03683v2.

\bibitem[HKLY]{HKLY}J. Hu, H.-Z. Ke, C. Li, T. Yang, \emph{On Gamma conjecture $I$ for del Pezzo surfaces}. in preparation.

\bibitem[LMS]{LMS}C. Li, L. Mihalcea, R. Shifler, \emph{Conjecture $\cO$ holds for the odd symplectic Grassmannian}, arXiv:1706.00744.

\bibitem[LP]{LP}Y.-P. Lee, R. Pandharipande, \emph{A reconstruction theorem in quantum cohomology and quantum K-theory}, Amer. J. Math. \textbf{126}(2004), 1367-1379.
 
 
\bibitem[Z]{Z}A. Zinger, \emph{Standard versus reduced genus-one Gromov-Witten invariants}, Geom. Topol. \textbf{12}(2008), 1203-1241.








\end{thebibliography}
\end{document}